
\documentclass[11pt, titlepage]{article}\usepackage[]{graphicx}\usepackage[]{color}
\usepackage{alltt}
\usepackage{moreverb}
\usepackage{amsmath}
\usepackage{amssymb}
\usepackage{amsthm}
\usepackage{lineno}
\usepackage{wrapfig}
\setlength{\parindent}{3em} 
\usepackage{setspace}
\pdfpagewidth 8.5in
\pdfpageheight 11in
\setlength{\oddsidemargin}{0.0in} \setlength{\textwidth}{6.5in}
\setlength{\topmargin}{0.15in} \setlength{\textheight}{8.5in}
\setlength{\headheight}{0.0in} \setlength{\headsep}{0.0in}
\usepackage{titling}
\pretitle{\begin{center}\LARGE}
\posttitle{\par\end{center}\vskip 0.5em}
\preauthor{\large \lineskip 0.5em}
\DeclareRobustCommand{\authorthing}{
\begin{center}
Jay M. Ver Hoef$^{a}$, Ephraim M. Hanks$^{b}$, Mevin B. Hooten$^{c}$ \\
\vspace{.5cm} 
\begin{tabular}{rl}
\multicolumn{2}{c} {\hrulefill} \\
\multicolumn{2}{l} {$^a$ Marine Mammal Laboratory, NOAA Alaska Fisheries Science Center,} \\
{} & \hspace{.5cm} 7600 Sand Point Way NE, Seattle, WA 98115, tel: (907) 456-1995 \\
\multicolumn{2}{c}{and} \\
\multicolumn{2}{l}{$^b$ Department of Statistics, The Pennsylvania State University} \\
\multicolumn{2}{c}{and} \\
\multicolumn{2}{l} {$^{c}$ U.S. Geological Survey, Colorado Cooperative Fish and Wildlife Research Unit,} \\
{} & \hspace{.5cm}  Department of Fish, Wildlife, and Conservation Biology, \\
{} & \hspace{.5cm}  and Department of Statistics, Colorado State University \\
\multicolumn{2}{c} {\hrulefill} \\
\end{tabular}
\end{center}}
\author{\authorthing}
\postauthor{}
\predate{\begin{center}\large}
\postdate{\par\end{center}}
\usepackage{natbib,enumitem}

\newcommand\BibTeX{{\rmfamily B\kern-.05em \textsc{i\kern-.025em b}\kern-.08em
T\kern-.1667em\lower.7ex\hbox{E}\kern-.125emX}}

\newcommand\bb{\mathbf{b}}
\newcommand\bv{\mathbf{v}}
\newcommand\bx{\mathbf{x}}
\newcommand\bz{\mathbf{z}}
\newcommand\by{\mathbf{y}}
\newcommand\bA{\mathbf{A}}
\newcommand\bB{\mathbf{B}}
\newcommand\bC{\mathbf{C}}
\newcommand\bD{\mathbf{D}}
\newcommand\bE{\mathbf{E}}
\newcommand\bF{\mathbf{F}}
\newcommand\bG{\mathbf{G}}
\newcommand\bH{\mathbf{H}}
\newcommand\bI{\mathbf{I}}

\newcommand\bL{\mathbf{L}}
\newcommand\bM{\mathbf{M}}
\newcommand\bP{\mathbf{P}}
\newcommand\bQ{\mathbf{Q}}

\newcommand\bS{\mathbf{S}}
\newcommand\bU{\mathbf{U}}
\newcommand\bV{\mathbf{V}}
\newcommand\bW{\mathbf{W}}
\newcommand\bZ{\mathbf{Z}}
\newcommand\cI{\mathcal{I}}
\newcommand\cL{\mathcal{L}}
\newcommand\bvarepsilon{\mbox{\boldmath $\varepsilon$}}
\newcommand\bmu{\mbox{\boldmath $\mu$}}
\newcommand\bnu{\mbox{\boldmath $\nu$}}
\newcommand\btheta{\mbox{\boldmath $\theta$}}
\newcommand\bLambda{\mathbf{\Lambda}}
\newcommand\bSigma{\mathbf{\Sigma}}
\newcommand\bOmega{\mathbf{\Omega}}
\newcommand\bGamma{\mathbf{\Gamma}}
\newcommand\bzero{\mathbf{0}}
\newcommand\bone{\mathbf{1}}
\newcommand\upp{^{T}}
\newcommand\upi{^{-1}}
\newcommand\mun{\textrm{un}}
\newcommand\mrs{\textrm{rs}}
\newcommand\msp{\textrm{sp}}
\newcommand\mcg{\textrm{cg}}



\newtheorem{theorem}{Theorem}

\newtheorem{proposition}{Proposition}
\newtheorem{corollary}{Corollary}
\newtheorem{remark}{Remark}

\makeatletter
\def\namedlabel#1#2{\begingroup
    #2%
    \def\@currentlabel{#2}%
    \phantomsection\label{#1}\endgroup
}
\makeatother
\makeatletter
\newcommand*\bigcdot{\mathpalette\bigcdot@{.5}}
\newcommand*\bigcdot@[2]{\mathbin{\vcenter{\hbox{\scalebox{#2}{$\m@th#1\bullet$}}}}}
\makeatother
\IfFileExists{upquote.sty}{\usepackage{upquote}}{}
\begin{document}


\begin{titlepage}
\title {On the Relationship between Conditional (CAR) and Simultaneous (SAR) Autoregressive Models}
\end{titlepage}

\maketitle


\begin{abstract}
We clarify relationships between conditional (CAR) and simultaneous (SAR) autoregressive models.  We review the literature on this topic and find that it is mostly incomplete.  Our main result is that a SAR model can be written as a unique CAR model, and while a CAR model can be written as a SAR model, it is not unique. In fact, we show how any multivariate Gaussian distribution on a finite set of points with a positive-definite covariance matrix can be written as either a CAR or a SAR model. We illustrate how to obtain any number of SAR covariance matrices from a single CAR covariance matrix by using Givens rotation matrices on a simulated example.  We also discuss sparseness in the original CAR construction, and for the resulting SAR weights matrix.  For a real example, we use crime data in 49 neighborhoods from Columbus, Ohio, and show that a geostatistical model optimizes the likelihood much better than typical first-order CAR models.  We then use the implied weights from the geostatistical model to estimate CAR model parameters that provides the best overall optimization. \\
\hrulefill \\
\noindent {\sc Key Words:} lattice models; areal models, spatial statistics, covariance matrix \\
\end{abstract}


\section{Introduction}

\citet[][p. 8]{Cres:stat:1993} divides statistical models for data collected at spatial locations into two broad classes: 1) geostatistical models with continuous spatial support, and 2) lattice models, also called areal models \citep{Bane:Carl:Gelf:hier:2004}, where data occur on a (possibly irregular) grid, or lattice, with a countable set of nodes or locations. The two most common lattice models are the conditional autoregressive (CAR) and simultaneous autoregressive (SAR) models, both notable for sparseness of their precision matrices.  These autoregressive models are ubiquitous in many fields, including disease mapping \citep[e.g.,][]{Clay:Kald:empi:1987,Laws:stat:2013}, agriculture \citep{Cull:Glee:spat:1991,Besa:Higd:Baye:1999}, econometrics \citep{Anse:spat:1988,LeSa:Pace:intr:2009}, ecology \citep{Lich:Simo:Shri:Fran:spat:2002,Kiss:Carl:spat:2008}, and image analysis \citep{Besa:stat:1986,Li:mark:2009}. CAR models form the basis for Gaussian Markov random fields \citep{Rue:Held:Gaus:2005} and the popular integrated nested Laplace approximation methods \citep[INLA,][]{Rue:Mart:Chop:appr:2009}, and SAR models are popular in geographic information systems (GIS) with the GeoDa software \citep{Anse:Syab:Kho:geod:2006}. Hence, both CAR and SAR models serve as the basis for countless scientific conclusions.  Because these are the two most common classes of models for lattice data, it is natural to compare and contrast them.  There has been sporadic interest in studying the relationships between CAR and SAR models \citep[e.g.,][]{Wall:clos:2004}, and how one model might or might not be expressed in terms of the other \citep{Hain:spat:1990,Cres:stat:1993,Mart:some:1987,Wall:Gotw:appl:2004}, but there is little clarity in the existing literature on the relationships between these two classes of autoregressive models.  

Our goal is to clarify, and add to, the existing literature on the relationships between CAR and SAR covariance matrices, by showing that any positive-definite covariance matrix for a multivariate Gaussian distribution on a finite set of points can be written as either a CAR or a SAR covariance matrix, and hence any valid SAR covariance matrix can be expressed as a valid CAR covariance matrix, and vice versa.  This result shows that on a finite dimensional space, both SAR and CAR models are completely general models for spatial covariance, able to capture any positive-definite covariance.  While CAR and SAR models are among the most commonly-used spatial statistical models, this correspondence between them, and the generality of both models, has not been fully described before now.  These results also shed light on some previous literature.

This paper is organized as follows: In Section~\ref{sec:rev}, we review SAR and CAR models and lay out necessary conditions for these models. In Section~\ref{sec:relcarsar}, we provide theorems that show how to obtain SAR and CAR covariance matrices from any positive definite covariance matrix, which also establishes the relationship between CAR and SAR covariance matrices.  In Section~\ref{sec:ex}, we provide examples of obtaining SAR covariance matrices from a CAR covariance matrix on fabricated data, and a real example for obtaining a CAR covariance matrix for a geostatistical covariance matrix.  Finally, in Section~\ref{sec:discon}, we conclude with a detailed discussion of the incomplete results of previous literature.


\section{Review of SAR and CAR models} \label{sec:rev}

In what follows, we denote matrices with bold capital letters, and their $i$th row and $j$th column with small case letters with subscripts $i,j$; for example, the $i,j$th element of $\bC$ is $c_{i,j}$.  Vectors are denoted as lower case bold letters. Let $\bZ \equiv (Z_1,Z_2,\ldots,Z_n)\upp$ be a vector of $n$ random variables at the nodes of a graph (or junctions of a lattice). The edges in the graph, or connections in the lattice, define neighbors, which are used to model spatial dependency.


\subsection{SAR Models} \label{sec:SAR}

Consider the SAR model with mean zero. An explicit autocorrelation structure is imposed,
\begin{equation} \label{eq:sareta}
  \bZ = \bB\bZ + \bnu,
\end{equation}
where the $n \times n$ spatial dependence matrix, $\bB$, is relating $\bZ$ to itself, and $\bnu \sim \textrm{N}(\bzero,\bOmega)$, where $\bOmega$ is diagonal with positive values. These models are generally attributed to \citet{Whit:stat:1954}. Solving for $\bZ$, note that sites cannot depend on themselves so $\bB$ will have zeros on the diagonal, and that $(\bI - \bB)\upi$ must exist \citep{Cres:stat:1993, Wall:Gotw:appl:2004}, where $\bI$ is the identity matrix. Then $\bZ \sim \textrm{N}(\bzero,\bSigma_{\textrm{SAR}})$, where 
\begin{equation} \label{eq:sarcov}
    \bSigma_{\textrm{SAR}} = (\bI - \bB)\upi\bOmega(\bI - \bB\upp)\upi;
\end{equation}
see, for example, \citet[p. 409]{Cres:stat:1993}. The spatial dependence in the SAR model is due to the matrix $\bB$ which causes the simultaneous autoregression of each random variable on its neighbors. Note that $\bB$ does not have to be symmetric because it does not appear directly in the inverse of the covariance matrix (i.e., precision matrix).  The covariance matrix must be positive definite. For SAR models, it is enough that $(\bI - \bB)$ is nonsingular (i.e., that $(\bI - \bB)\upi$ exists), because the quadratic form, writing it as $(\bI - \bB)\upi\bOmega[(\bI - \bB)\upi]\upp$, with $\bOmega$ containing positive diagonal values, ensures $\bSigma_{\textrm{SAR}}$ will be positive definite. 

In summary, the following conditions must be met for $\bSigma_{\textrm{SAR}}$ in (\ref{eq:sarcov}) to be a valid SAR covariance matrix:
\begin{description}[labelindent=1cm, labelwidth=1cm, nosep]
	\begin{singlespace}
        \item[\namedlabel{S1}{S1}]  $(\bI-\bB)$ is nonsingular, 
        \item[\namedlabel{S2}{S2}]  $\bOmega$ is diagonal with positive elements, and 
        \item[\namedlabel{S3}{S3}]  $b_{i,i} = 0, \; \forall \; i$.
	\end{singlespace}
\end{description}


\subsection{CAR models} \label{sec:CAR}

The term ``conditional,'' in the CAR model, is used because each element of the random process is specified conditionally on the values of the neighboring nodes. Let $Z_i$ be a random variable at the $i$th location, again assuming that the expectation of $Z_i$ is zero for simplicity, and let $z_j$ be its realized value. The CAR model is typically specified as
\begin{equation} \label{eq:car2}
				Z_i|\bz_{-i} \sim \textrm{N}\left(\sum_{\forall c_{i,j}\neq 0} c_{i,j}z_j,m_{i,i}\right),
  \end{equation}
where $\bz_{-i}$ is the vector of all $z_j$ where $j \ne i$, $\bC$ is the spatial dependence matrix with $c_{i,j}$ as its $i,j$th element, $c_{i,i} = 0$, and $\bM$ is a diagonal matrix with positive diagonal elements $m_{i,i}$. Note that $m_{i,i}$ may depend on the values in the $i$th row of $\bC$. In this parameterization, the conditional mean of each $Z_i$ is weighted by values at neighboring nodes. The variance component, $m_{i,i}$, often varies with node $i$, and thus $\bM$ is generally nonstationary.  In contrast to SAR models, it is not obvious that (\ref{eq:car2}) leads to a full joint distribution for $\bZ$. \citet{Besa:spat:1974} used Brook's lemma \citep{Broo:dist:1964} and the Hammersley-Clifford theorem \citep{Hamm:Clif:Mark:1971,Clif:Mark:1990} to show that, when $(\bI-\bC)\upi\bM$ is positive definite, $\bZ \sim \textrm{N}(\bzero,\bSigma_{\textrm{CAR}})$, with
\begin{equation} \label{eq:carcov}
        \bSigma_{\textrm{CAR}} = (\bI-\bC)\upi\bM.
\end{equation}
$\bSigma_{\textrm{CAR}}$ must be symmetric, requiring
\begin{equation} \label{eq:CarSymmetry}
				\frac{c_{i,j}}{m_{i,i}}=\frac{c_{j,i}}{m_{j,j}}, \; \; \forall \; i,j.
\end{equation}
Most authors describe CAR models as the construction (\ref{eq:car2}), with condition that $\bSigma_\textrm{CAR}$ must be positive definite given the symmetry condition (\ref{eq:CarSymmetry}).  However, a more specific statement is possible on the necessary conditions for $(\bI - \bC)$, making a comparable condition to \ref{S1} for SAR models. We provide a novel proof, Proposition~\ref{pro:C1proof} in the Appendix, showing that if $\bM$ is positive definite along with (\ref{eq:CarSymmetry}) (forcing symmetry on $\bSigma_\textrm{CAR}$), it is only necessary for $(\bI-\bC)$ to have positive eigenvalues for $\bSigma_\textrm{CAR}$ to be positive definite.

In summary, the following conditions must be met for $\bSigma_{\textrm{CAR}}$ in (\ref{eq:carcov}) to be a valid CAR covariance matrix:
\begin{description}[labelindent=1cm, labelwidth=1cm, nosep]
	\begin{singlespace}
        \item[\namedlabel{C1}{C1}] $(\bI-\bC)$ has positive eigenvalues,
        \item[\namedlabel{C2}{C2}] $\bM$ is diagonal with positive elements,
        \item[\namedlabel{C3}{C3}]  $c_{i,i} = 0, \, \forall \, i$, and
        \item[\namedlabel{C4}{C4}] $c_{i,j}/m_{i,i}=c_{j,i}/m_{j,j}, \; \forall \; i,j$. 
	\end{singlespace}
\end{description}


\subsection{Weights Matrices} \label{sec:weightsMat}

In practice, $\bB = \rho_s\bW$ and $\bC = \rho_c\bW$ are usually used to construct valid SAR and CAR models, where $\bW$ is a weights matrix with $w_{i,j} \ne 0$ when locations $i$ and $j$ are neighbors, otherwise $w_{i,j} = 0$. Neighbors are typically pre-specified by the modeler. When $i$ and $j$ are neighbors, we often set $w_{i,j} = 1$, or use row-standardization so that $\sum_{j=1}^n w_{i,j} = 1$; that is, dividing each row in unstandardized $\bW$ by $w_{i,+} \equiv \sum_{j=1}^n  w_{i,j}$ yields an asymmetric row-standardized matrix that we denote as $\bW_+$. For CAR models, define $\bM_+$ as the diagonal matrix with $m_{i,i} = 1/w_{i,+}$, then (\ref{eq:CarSymmetry}) is satisfied.  The row-standardized CAR model can be written equivalently as
\begin{equation}\label{eq:bWone}
				\bSigma_+ = \sigma^2(\bI - \rho_c\bW_+)\upi\bM_+ = \sigma^2(\textrm{diag}(\bW\bone) - \rho_c\bW)\upi,
\end{equation}
where $\bone$ is a vector of all ones, $\sigma^2$ is an overall variance parameter, and diag($\cdot$) creates a diagonal matrix from a vector.  A special case of the CAR model, called the intrinsic autoregressive model (IAR) \citep{Besa:Koop:cond:1995}, occurs when $\rho_c = 1$, but the covariance matrix does not exist, so we do not consider it further.

There can be confusion on how $\rho$ is constrained for SAR and CAR models, which we now clarify. Suppose that $\bW$ has all real eigenvalues. Let $\{\lambda_i\}$ be the set of eigenvalues of $\bW$, and let $\{\omega_i\}$ be the set of eigenvalues of $(\bI - \rho\bW)$. Then, in the Appendix (Proposition 4), we show that $\omega_i = (1 - \rho\lambda_i)$. First, notice that if $\lambda_i = 0$, then $\omega_i = 1$ for all $\rho$. Hence,  $(\bI - \rho\bW)$ will be nonsingular for all $\rho \notin \{\lambda_i\upi\}$ whenever $\lambda_i\ne 0$, which is sufficient for SAR model condition~\ref{S1}.  Note that it is possible for all $\omega_i$ to be positive, even when $\rho\bW$ has some zero eigenvalues ($\lambda_i = 0$), and thus our result is more general than that of \citet{Li:Cald:Cres:beyo:2007}, who only consider the case when all $\lambda_i \ne 0$.  If any $\lambda_i \ne 0$, then at least two $\lambda_i$ are nonzero because $\textrm{tr}(\bW) = \sum_{i=1}^n\lambda_i = 0$.  If at least two eigenvalues are nonzero, then $\lambda_{[1]}$, the smallest eigenvalue of $\bW$, must be less than zero, and $\lambda_{[N]}$, the largest eigenvalue of $\bW$, must be greater than zero. Then $1/\lambda_{[1]} < \rho < 1/\lambda_{[N]}$ ensures that $(\bI - \rho\bW)$ has positive eigenvalues (Appendix, Proposition~\ref{pro:PDrhobounds}) and satisfies condition~\ref{C1} for CAR models.  For SAR models, if $(\bI - \rho\bW)$ has positive eigenvalues it is also nonsingular, so $1/\lambda_{[1]} < \rho < 1/\lambda_{[N]}$ provides a sufficient (but not necessary) condition for condition~\ref{S1}.

In practice, the restriction $1/\lambda_{[1]} < \rho < 1/\lambda_{[N]}$ is often used for both CAR and SAR models.  When considering $\bW_+$, the restriction becomes $1/\lambda_{[1]} < \rho < 1$, where usually $1/\lambda_{[1]} < -1$. \citet{Wall:clos:2004} shows irregularities for negative $\rho$ values near the lower bound for both SAR and CAR models, thus many modelers simply use $-1 < \rho < 1$.  In fact, in many cases, only positive autocorrelation is expected, so a further restriction is used where $0 < \rho < 1$ \citep[e.g.,][]{Li:Cald:Cres:beyo:2007}. For these constructions, $\rho$ typically has more positive marginal autocorrelation with increasing positive $\rho$ values, and more negative marginal autocorrelation with decreasing negative $\rho$ values \citep{Wall:clos:2004}.  There has been little research on the behavior of $\rho$ outside of these limits for SAR models.

Our goal is to develop relationships that allow a CAR covariance matrix, satisfying conditions \ref{C1} - \ref{C4}, to be obtained from a SAR covariance matrix, satisfying conditions \ref{S1} - \ref{S3}, and vice versa. We develop these in the next section, and, in the Discussion and Conclusions section, we contrast our results to the incomplete results of previous literature.


\section{Relationships between CAR and SAR models} \label{sec:relcarsar}

Assume a covariance matrix for a SAR model as given in (\ref{eq:sarcov}), and a covariance matrix for a CAR model as given in (\ref{eq:carcov}).  We show that any zero-mean Gaussian distribution on a finite set of points, $\bZ \sim \textrm{N}(\bzero,\bSigma)$, can be written with a covariance matrix parameterized either as a CAR model, $\bSigma=(\mathbf{I}-\mathbf{C})^{-1}\bM$, or as a SAR model, $\bSigma=(\mathbf{I}-\mathbf{B})^{-1}\bOmega (\mathbf{I}-\mathbf{B}^T)^{-1}$.  It is straightforward to generalize to the case where the mean is nonzero so, for simplicity of notation, we use the zero mean case.  A corollary is that any CAR covariance matrix can be written as a SAR covariance matrix, and vice versa.  Before proving the theorems, some preliminary results are useful.

\begin{proposition} \label{pro:zerodiagmult}
\begin{singlespace}
 If $\bD$ is a square diagonal matrix, and $\bQ$ is a square matrix with zeros on the diagonal, then, provided the matrices are conformable, both $\bD\bQ$ and $\bQ\bD$ have zeros on the diagonal.
\end{singlespace}
\end{proposition}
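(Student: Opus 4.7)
The plan is to verify the claim by a direct entry-wise computation of the diagonal of each product. Since the quantities involved are sums of products of scalar entries, no structural result beyond the definitions of matrix multiplication, a diagonal matrix, and a hollow matrix (one with zeros on the diagonal) is needed.

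First I would fix an arbitrary diagonal index $i$ and expand the $(i,i)$ entry of $\bD\bQ$ as $\sum_k d_{i,k} q_{k,i}$. Because $\bD$ is diagonal, $d_{i,k} = 0$ whenever $k \ne i$, so the sum collapses to the single term $d_{i,i} q_{i,i}$. Then the hypothesis on $\bQ$ gives $q_{i,i} = 0$, so this term vanishes and the $(i,i)$ entry of $\bD\bQ$ is zero. Since $i$ was arbitrary, the entire diagonal of $\bD\bQ$ is zero.

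Next I would perform the symmetric computation for $\bQ\bD$: the $(i,i)$ entry is $\sum_k q_{i,k} d_{k,i}$, which again collapses (this time using the diagonality of $\bD$ on the right factor) to $q_{i,i} d_{i,i} = 0$. Combining both, $\bD\bQ$ and $\bQ\bD$ both have zero diagonal.

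There is essentially no obstacle here; the only thing worth noting is that the proposition is stated for conformable matrices, so I would explicitly assume $\bD$ and $\bQ$ are $n \times n$ (consistent with the statement that both are square) so that the index $k$ in the sum ranges over the same set on both sides. The result will be applied later when constructing CAR and SAR representations, where it guarantees that forming products such as $\bM\upi\bC$ or $\bB\bOmega$ does not accidentally introduce nonzero diagonal entries into matrices that must satisfy $c_{i,i}=0$ or $b_{i,i}=0$.
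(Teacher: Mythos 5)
Your entry-wise computation is correct and is exactly the ``algebra of matrix products'' argument that the paper alludes to but omits, so you have simply filled in the proof the authors chose not to write out. Nothing to add.
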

\begin{proof}  We omit the proof because it is apparent from the algebra of matrix products. \qedhere
\end{proof}
\begin{proposition} \label{pro:mult3inverses}
\begin{singlespace}
 Let $\bA$, $\bB$, and $\bC$ be square matrices. If $\bA = \bB\bC$, and $\bA$ and $\bC$ have inverses, then $\bB$ has a unique inverse. 
\end{singlespace}
\end{proposition}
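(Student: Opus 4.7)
The plan is to solve for $\bB$ explicitly using the hypotheses and then invoke the standard fact that matrix inverses, when they exist, are unique.

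First I would right-multiply the equation $\bA = \bB\bC$ by $\bC^{-1}$, which exists by hypothesis, to obtain $\bB = \bA\bC^{-1}$. This expresses $\bB$ as the product of two invertible matrices, namely $\bA$ (invertible by hypothesis) and $\bC^{-1}$ (invertible because it is the inverse of an invertible matrix, with inverse $\bC$).

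Next I would argue that the product of two invertible square matrices is itself invertible: specifically, one checks that $\bC\bA^{-1}$ is a two-sided inverse of $\bA\bC^{-1}$ by direct multiplication, since $(\bA\bC^{-1})(\bC\bA^{-1}) = \bA(\bC^{-1}\bC)\bA^{-1} = \bA\bA^{-1} = \bI$ and similarly on the other side. Thus $\bB^{-1} = \bC\bA^{-1}$ exists.

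Finally, uniqueness follows from the standard elementary fact that a matrix can have at most one two-sided inverse: if $\bX$ and $\bY$ were both inverses of $\bB$, then $\bX = \bX(\bB\bY) = (\bX\bB)\bY = \bY$. So no real obstacle arises here; the proof is essentially a one-line algebraic manipulation together with the uniqueness-of-inverse argument, and the proposition simply records this fact for convenient citation in later theorems of the paper.
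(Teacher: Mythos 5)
Your proof is correct and follows essentially the same route as the paper's: solve $\bB = \bA\bC^{-1}$, identify $\bC\bA^{-1}$ as the inverse, and invoke uniqueness of the inverse. You simply spell out the two-sided verification and the uniqueness argument that the paper delegates to a citation.
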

\begin{proof}
\begin{singlespace}
	Because $\bC$ has an inverse, $\bB = \bA\bC\upi$, and because $\bA$ has an inverse, $\bB\upi = \bC\bA\upi$.  $\bB\upi$ is unique because it is square and full-rank \citep[e.g.,][p. 80]{Harv:matr:1997}. \qedhere
\end{singlespace}
\end{proof}

We now prove that both SAR and CAR covariance matrices are sufficiently general to represent any finite-dimensional positive-definite covariance matrix.
\begin{theorem} \label{SARtheorem}
\begin{singlespace}
Any positive definite covariance matrix $\bSigma$ can be expressed as the covariance matrix of a SAR model $(\mathbf{I}-\mathbf{B})^{-1}\bOmega (\mathbf{I}-\mathbf{B}^T)^{-1}$, (\ref{eq:sarcov}), for a (non-unique) pair of matrices $\bB$ and $\bOmega$.
\end{singlespace}
\end{theorem}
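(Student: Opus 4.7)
The plan is to invert the problem and work with the precision matrix instead of the covariance matrix. From (\ref{eq:sarcov}), finding $(\bB,\bOmega)$ that satisfy the SAR conditions is equivalent to exhibiting a factorization
\begin{equation*}
\bSigma^{-1} = (\bI-\bB^T)\bOmega^{-1}(\bI-\bB),
\end{equation*}
so it suffices to show that any positive definite matrix $\bSigma^{-1}$ can be written as $\bV^T\bD\bV$ where $\bV$ is square with $1$'s on the diagonal and $\bD$ is diagonal with strictly positive entries. Given such a decomposition, I would set $\bB := \bI-\bV$ and $\bOmega := \bD^{-1}$, and then check the three SAR conditions directly: \ref{S3} holds because a unit diagonal in $\bV$ forces a zero diagonal in $\bB$; \ref{S2} holds because positivity of the diagonal of $\bD$ is inherited by $\bD^{-1}$; and \ref{S1} holds because any triangular matrix with unit diagonal is automatically nonsingular.

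The factorization itself is standard linear algebra. Since $\bSigma$ is positive definite, so is $\bSigma^{-1}$, and the $\mathbf{LDL}^T$ decomposition (equivalently, a normalized Cholesky factorization) yields $\bSigma^{-1}=\bL\bD\bL^T$ with $\bL$ lower triangular and unit-diagonal and $\bD$ diagonal with strictly positive entries. Taking $\bV = \bL^T$ (upper triangular, unit-diagonal) immediately meets the requirements above, producing a $\bB = \bI - \bV$ that is strictly upper triangular and an $\bOmega = \bD^{-1}$ that is positive-diagonal. This handles existence.

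Non-uniqueness of the pair $(\bB,\bOmega)$ comes for free: for any permutation matrix $\bP$, the matrix $\bP\bSigma^{-1}\bP^T$ is also symmetric positive definite and admits its own $\mathbf{LDL}^T$ decomposition; unwinding the permutation produces a different triangular $\bV$ and hence a distinct valid SAR representation of the same $\bSigma$. The examples in Section~\ref{sec:ex} go further, using Givens rotations to generate a continuum of SAR representations of a single covariance matrix. I do not anticipate a genuine obstacle in this proof; the entire argument reduces to one invocation of the $\mathbf{LDL}^T$ decomposition, with its unit-diagonal normalization doing all of the work needed to enforce $b_{i,i}=0$.
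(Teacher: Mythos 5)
Your existence argument is correct and is, in substance, the paper's own proof: the $\mathbf{LDL}\upp$ factorization of $\bSigma\upi$ is exactly the paper's Cholesky factor $\bL$ with its diagonal $\bG$ divided out, so your unit-diagonal $\bV=\bI-\bB$ and your $\bD=\bOmega\upi$ coincide with the paper's $\bG\upi\bL\upp$ and $\bG\bG$ (the paper is marginally more general only in allowing non-triangular square roots of $\bSigma\upi$, e.g.\ from the spectral decomposition). The one step that does not work as written is the non-uniqueness remark. If $\bP\bSigma\upi\bP\upp=\bL_\pi\bD_\pi\bL_\pi\upp$ for a permutation matrix $\bP$, then ``unwinding'' on one side gives $\bSigma\upi=(\bP\upp\bL_\pi)\bD_\pi(\bP\upp\bL_\pi)\upp$, but $\bP\upp\bL_\pi$ is a row permutation of a unit lower triangular matrix: for a nontrivial permutation it is neither triangular nor unit-diagonal (at least one diagonal entry is zero), so the induced $\bB=\bI-(\bP\upp\bL_\pi)\upp$ violates \ref{S3}. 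The repair is to conjugate instead, $\bSigma\upi=(\bP\upp\bL_\pi\bP)(\bP\upp\bD_\pi\bP)(\bP\upp\bL_\pi\bP)\upp$, whose first factor is nonsingular with unit diagonal (though no longer triangular) and whose middle factor is diagonal and positive, so \ref{S1}--\ref{S3} still hold and a generic $\bSigma$ yields a genuinely different $\bB$; alternatively you can simply note, as the paper does, that a non-triangular square root of $\bSigma\upi$ (spectral decomposition) or a Givens rotation as in Corollary~\ref{infSARcorol} produces a second valid pair $(\bB,\bOmega)$.
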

\begin{proof}
\begin{singlespace}
We consider a constructive proof and show that the matrices $\bB$ and $\bOmega$ satisfy conditions \ref{S1} - \ref{S3}.  
\begin{enumerate}
\item [(i)] Write $\bSigma\upi = \bL\bL\upp$, and suppose that $\bL$ is full rank with positive eigenvalues. Note that $\bL$ is \emph{not} unique.  A Cholesky decomposition could be used, where $\bL$ is lower triangular, or a spectral (eigen) decomposition could be used, where $\bSigma = \bV\bE\bV\upp$, with $\bV$ containing orthonormal eigenvectors and $\bE$ containing eigenvalues on the diagonal and zeros elsewhere. Then $\bL = \bV\bE^{-1/2}$, where the diagonal matrix $\bE^{-1/2}$ contains reciprocals of square roots of the eigenvalues in $\bE$.  
\item [(ii)] Decompose $\bL$ into $\bL=\bG - \bP$ where $\bG$ is diagonal and $\bP$ has zeros on the diagonal.  Then $\bL\bL\upp = (\bG - \bP)(\bG\upp - \bP\upp)$ by construction.
\item [(iii)] Then set
\begin{equation}\label{eq:SARproof}
 \bOmega^{-1} = \mathbf{GG} \  \text{ and }\ \bB^T = \mathbf{PG}^{-1}.
\end{equation}
Note that because $\mathbf{L}$ has positive eigenvalues, then $\ell_{i,i}>0$, and because $\mathbf{G}$ is diagonal with $g_{i,i}=\ell_{i,i}$, $\mathbf{G}^{-1}$ exists.
\end{enumerate}
Then $\bSigma^{-1}=(\bI-\bB^T)\bOmega^{-1}(\bI-\bB)$, expressed in SAR form (\ref{eq:sarcov}).  The matrices $\bB$ and $\bOmega$ satisfy \ref{S1} - \ref{S3}, as follows.
\begin{enumerate}
\item [(\ref{S1})] Note that $\bP=\bB\upp\bG$, so $\bL=\bG-\bP=(\bI-\bB\upp)\bG$ and $\bL\upp = \bG(\bI - \bB)$.  Then, by Proposition \ref{pro:mult3inverses}, $(\bI-\bB)^{-1}$ and $(\bI-\bB\upp)^{-1}$ exist.
\item [(\ref{S2})] Because $\bG$ is diagonal, $\bOmega$ is diagonal with $\omega_{i,i}=g_{i,i}^2>0$.
\item [(\ref{S3})] By Proposition \ref{pro:zerodiagmult}, $b_{i,i}=0$ because $\bB^T=\bP\bG^{-1}$. \qedhere
 \end{enumerate}
\end{singlespace}
\end{proof}  


\begin{theorem} \label{CARtheorem}
\begin{singlespace}
Any positive-definite covariance matrix $\bSigma$ can be expressed as the covariance matrix of a CAR model $(\mathbf{I}-\mathbf{C})^{-1}\bM$, (\ref{eq:carcov}), for a unique pair of matrices $\bC$ and $\bM$ \citep[p. 434]{Cres:stat:1993}.
\end{singlespace}
\end{theorem}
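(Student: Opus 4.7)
The plan is to construct the pair $(\bC,\bM)$ explicitly from $\bSigma\upi$ and then observe that the construction is forced by the CAR conditions, giving existence and uniqueness together. Rearranging the defining identity $\bSigma = (\bI-\bC)\upi\bM$ gives $\bI - \bC = \bM\bSigma\upi$. Since \ref{C2} forces $\bM$ to be diagonal and \ref{C3} forces the diagonal of $\bC$ to vanish, reading off the diagonal of both sides of this equation requires $m_{i,i}(\bSigma\upi)_{i,i} = 1$ for every $i$. I would therefore define
\[
  m_{i,i} = \frac{1}{(\bSigma\upi)_{i,i}}, \qquad \bC = \bI - \bM\bSigma\upi.
\]
Positive definiteness of $\bSigma\upi$ guarantees that every $(\bSigma\upi)_{i,i}>0$, so $\bM$ is well-defined, diagonal, and has strictly positive entries, giving \ref{C2}, while the definition of $\bM$ makes the diagonal of $\bM\bSigma\upi$ equal to the identity, so $\bC$ automatically has zero diagonal, giving \ref{C3}.

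For \ref{C4}, I would compute $c_{i,j} = -m_{i,i}(\bSigma\upi)_{i,j}$ for $i \ne j$, so that $c_{i,j}/m_{i,i} = -(\bSigma\upi)_{i,j}$. Symmetry of $\bSigma\upi$ then immediately gives $c_{i,j}/m_{i,i} = c_{j,i}/m_{j,j}$. The step I expect to require the most care is \ref{C1}, which asks that $\bI - \bC = \bM\bSigma\upi$ have positive eigenvalues. The obstacle is that this product is not symmetric in general, so positive definiteness of the factors cannot be invoked verbatim. The key observation is that conjugation by $\bM^{1/2}$ gives the similarity $\bM^{-1/2}(\bM\bSigma\upi)\bM^{1/2} = \bM^{1/2}\bSigma\upi\bM^{1/2}$, and the right-hand side is symmetric positive definite because it is a congruence of $\bSigma\upi$ by the invertible diagonal matrix $\bM^{1/2}$. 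Hence $\bI - \bC$ shares the spectrum of a symmetric positive definite matrix and its eigenvalues are positive reals, verifying \ref{C1}.

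Uniqueness is then almost free, which is why I would handle it last. Any valid CAR representation of $\bSigma$ must satisfy $\bM\bSigma\upi = \bI - \bC$ with $\bM$ diagonal and $\bC$ zero-diagonal; matching the diagonals of both sides forces $m_{i,i}$ to equal $1/(\bSigma\upi)_{i,i}$, and then $\bC = \bI - \bM\bSigma\upi$ is determined. So up to the eigenvalue verification in \ref{C1}, the argument is essentially algebraic bookkeeping on the identity $\bI - \bC = \bM\bSigma\upi$, with the positivity of the diagonal of $\bSigma\upi$ and the similarity trick doing all the real work.
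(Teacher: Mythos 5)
Your construction is exactly the paper's: $\bM$ is the inverse of the diagonal of $\bSigma\upi$, $\bC = \bI - \bM\bSigma\upi$, and your similarity argument for \ref{C1} is the same idea as the paper's Proposition on products of a positive-definite matrix with a diagonal positive-definite matrix. The proof is correct and follows essentially the same route; if anything, your uniqueness step (matching diagonals of $\bM\bSigma\upi = \bI - \bC$ to show any valid pair is forced) is stated a little more completely than the paper's.
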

\begin{proof}
\begin{singlespace}
We add an explicit, constructive proof of the result given by \citet[p. 434]{Cres:stat:1993} by showing that matrices $\bC$ and $\bM$ are unique and satisfy conditions \ref{C1} - \ref{C4}.
\begin{enumerate}
\item [(i)] Let $\mathbf{Q}=\bSigma^{-1}$ and decompose it into $\mathbf{Q}=\mathbf{D}-\mathbf{R}$, where $\mathbf{D}$ is diagonal with elements $d_{i,i}=q_{i,i}$ (the diagonal elements of the precision matrix $\mathbf{Q}$), and $\mathbf{R}$ has zeros on the diagonal ($r_{i,i}=0$) and off-diagonals equal to $r_{i,j}=-q_{i,j}$.
\item [(ii)] Set
  \begin{equation}\label{eq:CARproof}
\mathbf{C} = \mathbf{D}^{-1}\mathbf{R} \ \text{ and } \ \mathbf{M} = \mathbf{D}^{-1}.
\end{equation}
\end{enumerate}
Then $\bSigma^{-1}= \mathbf{D}-\mathbf{R} = \mathbf{D}(\mathbf{I}-\mathbf{D}^{-1}\mathbf{R}) = \bM^{-1}(\mathbf{I}-\mathbf{C})$, with $\bSigma$ expressed in CAR form (\ref{eq:carcov}).  The matrices $\mathbf{C}$ and $\mathbf{M}$, from (\ref{eq:CARproof}), are uniquely determined by $\bSigma$ because $\bSigma$ and $\bD$ have unique inverses, and satisfy \ref{C1} - \ref{C4}, as follows.
\begin{enumerate}
\item [(\ref{C1})] $\bM$ is strictly diagonal with positive values, so $\bM$ and $\bM\upi$ are positive definite.  By hypothesis, $\bSigma$, and hence $\bSigma\upi$ are positive definite.  Then $\bSigma\upi\bM = (\bI - \bC)$, so by Proposition~\ref{pro:C1proof} in the Appendix, $(\bI - \bC)$ has positive eigenvalues.
\item [(\ref{C2})] $m_{i,i}=1/q_{i,i}$, and because $\mathbf{Q}=\bSigma^{-1}$ is positive definite, we have that $q_{i,i}>0,i=1,2,\ldots,n$.  Thus, each $m_{i,i}>0$.  By construction, $m_{i,j}=0$ for $i\neq j$.  
\item [(\ref{C3})] By Proposition 2.1, $c_{i,i}=0$ because $\mathbf{C}=\mathbf{D}^{-1}\mathbf{R}$.
\item [(\ref{C4})] For $i\neq j$, we have that $c_{i,j}=d_{i,i}^{-1}r_{i,j}$.  As $m_{i,i}=d_{i,i}^{-1}=q_{i,i}$, we have that
\[\frac{c_{i,j}}{m_{i,i}} = \frac{d_{i,i}^{-1}r_{i,j}}{d_{i,i}^{-1}}=r_{i,j}=-q_{i,j}.\]
Because $\mathbf{Q}=\bSigma^{-1}$ is symmetric, $q_{i,j}=q_{j,i}$ and $c_{i,j}/m_{i,i} = c_{j,i}/m_{j,j}$. \qedhere
\end{enumerate}
\end{singlespace}
\end{proof}  


Having shown that any positive definite matrix $\bSigma$ can be expressed as either the covariance matrix of a CAR model or the covariance matrix of a SAR model, we have the following corollary.

\begin{corollary} \label{SAReqCAR}
\begin{singlespace}
Any SAR model can be written as a unique CAR model, and any CAR model can be written as a non-unique SAR model.
\end{singlespace}
\end{corollary}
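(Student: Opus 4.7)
The plan is to observe that Theorems~\ref{SARtheorem} and~\ref{CARtheorem} already carry essentially all the content of the corollary, so the only substantive task is to verify that a valid SAR covariance matrix and a valid CAR covariance matrix are each themselves positive definite, and then invoke the appropriate theorem in each direction.

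For the first half, I would start with a SAR covariance matrix $\bSigma_{\textrm{SAR}} = (\bI-\bB)^{-1}\bOmega(\bI-\bB^T)^{-1}$ whose ingredients satisfy \ref{S1}--\ref{S3}. Condition \ref{S1} makes $(\bI-\bB)^{-1}$ well-defined, and \ref{S2} makes $\bOmega$ a diagonal matrix with positive entries, so $\bSigma_{\textrm{SAR}}$ is the congruence $\bA\bOmega\bA^T$ of a positive-definite diagonal matrix by the nonsingular matrix $\bA=(\bI-\bB)^{-1}$, and is therefore positive definite (as already noted in Section~\ref{sec:SAR}). Applying Theorem~\ref{CARtheorem} to this $\bSigma_{\textrm{SAR}}$ then produces a unique pair $(\bC,\bM)$ with $\bSigma_{\textrm{SAR}} = (\bI-\bC)^{-1}\bM$, giving the ``unique CAR'' half of the corollary.

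For the second half, I would start with a CAR covariance matrix $\bSigma_{\textrm{CAR}} = (\bI-\bC)^{-1}\bM$ whose ingredients satisfy \ref{C1}--\ref{C4}. Positive definiteness is less immediate here, but follows from \ref{C1}, \ref{C2}, and the symmetry condition \ref{C4} via Proposition~\ref{pro:C1proof} in the Appendix. With positive definiteness in hand, Theorem~\ref{SARtheorem} supplies a pair $(\bB,\bOmega)$ satisfying \ref{S1}--\ref{S3} with $\bSigma_{\textrm{CAR}} = (\bI-\bB)^{-1}\bOmega(\bI-\bB^T)^{-1}$. The non-uniqueness is inherited directly from that theorem: step~(i) of its proof depends on the non-unique factorization $\bSigma^{-1}=\bL\bL^T$, and different square roots (e.g.\ Cholesky versus spectral) yield different admissible $(\bB,\bOmega)$.

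The hard part, such as it is, is really only bookkeeping: confirming that each of the two ``valid model'' hypotheses actually implies positive definiteness of the induced $\bSigma$ so that the relevant theorem applies. The CAR-to-SAR direction is the slightly more delicate of the two, since positive definiteness must be extracted from the eigenvalue condition \ref{C1} together with the symmetry condition \ref{C4} by invoking Proposition~\ref{pro:C1proof}; the SAR-to-CAR direction is a one-line quadratic-form argument. No additional machinery beyond what the excerpt already provides is needed.
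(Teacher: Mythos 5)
Your proposal is correct and follows exactly the paper's own route: note that each model's covariance matrix is positive definite and then invoke Theorem~\ref{CARtheorem} (for SAR $\to$ unique CAR) and Theorem~\ref{SARtheorem} (for CAR $\to$ non-unique SAR). You merely flesh out the positive-definiteness checks that the paper states in one line.
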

\begin{proof}
\begin{singlespace}
The proof follows directly by first noting that a SAR model yields a positive-definite covariance matrix, and applying Theorem~\ref{CARtheorem}, and then noting that a CAR model yields a positive-definite covariance matrix, and applying Theorem~\ref{SARtheorem}. \qedhere
\end{singlespace}
\end{proof}

The following corollary gives more details on the non-unique nature of the SAR models.

\begin{corollary} \label{infSARcorol}
\begin{singlespace}
				Any positive-definite covariance matrix can be expressed as one of an infinite number of $\bB$ matrices that define the SAR covariance matrix in (\ref{eq:sarcov}).
\end{singlespace}
\end{corollary}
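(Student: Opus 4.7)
The plan is to revisit the constructive proof of Theorem~\ref{SARtheorem} and exploit the only non-canonical step, namely the choice of factor $\bL$ satisfying $\bSigma\upi = \bL\bL\upp$. That proof already notes that $\bL$ is not unique (the Cholesky and spectral factors being two distinct examples); I would upgrade this remark to an explicit infinite-parameter family of factorizations, and then verify that they induce infinitely many distinct SAR matrices $\bB$.

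First, I would fix the Cholesky factor $\bL_0$ of $\bSigma\upi$, which is lower triangular with strictly positive diagonal and hence produces a valid SAR pair $(\bB_0,\bOmega_0)$ via~(\ref{eq:SARproof}). For any $n\times n$ orthogonal matrix $\bU$, set $\bL_\bU := \bL_0 \bU$; then $\bL_\bU \bL_\bU\upp = \bL_0 \bU \bU\upp \bL_0\upp = \bSigma\upi$, so $\bL_\bU$ is another valid square root. The set of $\bU$ for which $\bL_\bU$ has all nonzero diagonal entries is a dense open subset of the orthogonal group (its complement is cut out by $n$ polynomial equations of the form $(\bL_0\bU)_{i,i}=0$), and it contains continuous one-parameter subfamilies such as the Givens rotations used in Section~\ref{sec:ex}. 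For each such $\bU$, applying~(\ref{eq:SARproof}) to $\bL_\bU$ yields a SAR pair $(\bB_\bU,\bOmega_\bU)$ satisfying \ref{S1}--\ref{S3}, as already established by Theorem~\ref{SARtheorem}.

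The crux is then to show that the map $\bU \mapsto \bB_\bU$ has infinite image. Suppose $\bB_\bU = \bB_{\bU'} = \bB$. From the identity $\bL_\bU = (\bI - \bB\upp)\bG_\bU$ established while verifying condition~\ref{S1} in the proof of Theorem~\ref{SARtheorem} (and the analogue for $\bU'$), invertibility of $(\bI - \bB\upp)$ gives $\bL_\bU \bG_\bU\upi = \bL_{\bU'} \bG_{\bU'}\upi$. Substituting $\bL_\bU = \bL_0\bU$ and $\bL_{\bU'} = \bL_0\bU'$ and cancelling $\bL_0$ leaves $\bU\upi\bU' = \bG_\bU\upi\bG_{\bU'}$. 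The right side is diagonal and the left side is orthogonal, forcing $\bU\upi\bU'$ to be a diagonal orthogonal matrix, i.e.\ to have all entries $\pm 1$. Thus the fiber of $\bU \mapsto \bB_\bU$ over each $\bB$ contains at most $2^n$ orthogonal matrices, so the image is infinite whenever the orthogonal group is (which holds for $n\ge 2$).

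The main obstacle I anticipate is precisely this distinctness step: it is tempting to conclude immediately from ``infinitely many factorizations $\bL$'' to ``infinitely many matrices $\bB$,'' but the construction~(\ref{eq:SARproof}) is insensitive to right-multiplication of $\bL$ by any $\pm 1$ diagonal matrix, so one must verify that a continuously varying family of $\bU$'s is not trapped in a single finite fiber. The calculation above bounds the fiber size by $2^n$, which is enough; the rest of the argument is then a direct appeal to Theorem~\ref{SARtheorem} applied family-wise.
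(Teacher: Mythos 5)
Your construction is the same as the paper's in spirit: both proofs perturb the square root $\bL$ of $\bSigma\upi$ by orthogonal matrices (the paper uses the one-parameter family of Givens rotations $\bA_{h,s}(\theta)$, you use a generic $\bU$ in the orthogonal group) and then feed each resulting factor through the recipe of Theorem~\ref{SARtheorem}. Where you genuinely depart from the paper is in the distinctness step, and your version is the stronger one. The paper simply asserts that each $\theta\in[0,2\pi)$ gives ``a different $\bL_*$, and a different $\bB$,'' but a different $\bL_*$ does not automatically give a different $\bB$: as you observe, right-multiplying $\bL$ by a diagonal $\pm 1$ matrix changes $\bL$ but leaves $\bB\upp=\bP\bG\upi$ fixed (the signs cancel), and indeed $\bA_{h,s}(\pi)$ is exactly such a matrix, so $\theta=0$ and $\theta=\pi$ produce the same $\bB$. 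Your fiber computation --- $\bB_\bU=\bB_{\bU'}$ forces $\bU\upi\bU'=\bG_\bU\upi\bG_{\bU'}$ to be simultaneously orthogonal and diagonal, hence a $\pm 1$ diagonal matrix, so each fiber has at most $2^n$ elements --- is precisely what is needed to convert ``infinitely many factors $\bL_\bU$'' into ``infinitely many matrices $\bB_\bU$.'' You also patch a second small gap: the construction requires $\bG_\bU$ invertible, i.e.\ nonzero diagonal entries of $\bL_\bU$, which can fail for isolated rotations; your restriction to the dense open set where all $(\bL_0\bU)_{i,i}\neq 0$ handles this, whereas the paper is silent on it. In short, your argument proves the corollary as stated and in fact repairs the paper's own proof; the paper's version buys only brevity and an explicit, easily implemented family (the Givens rotations used later in Section~\ref{sec:ex}), which your family contains anyway.
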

\begin{proof} 
\begin{singlespace}
Write $\bSigma\upi = \bL\bL\upp$ as in Theorem~\ref{SARtheorem}. Let $\bA_{h,s}(\theta)$ be a Givens rotation matrix \citep{Golu:Van:matr:2012}, which is a sparse orthonormal matrix that rotates angle $\theta$ through the plane spanned by the $h$ and $s$ axes.  The elements of $\bA_{h,s}(\theta)$ are as follows.  For $i \notin \{h,s\},\ a_{i,i}=1$.  For $i \in \{h,s\}, \ a_{h,h}=a_{s,s}=\cos(\theta),\ a_{h,s}=\sin(\theta)$ and $a_{s,h}=-\sin(\theta)$.  All other entries of $\bA_{h,s}(\theta)$ are equal to zero. Notice that $\bSigma\upi = \bL\bL\upp = \bL(\bA_{h,s}\upp(\theta)\bA_{h,s}(\theta))\bL\upp = \bL_*\bL_*\upp$, where $\bL_*=\bL\bA_{h,s}\upp(\theta)$.  A SAR covariance matrix can be developed as readily for $ \bL_*$ as for $\bL$ in the proof of Theorem~\ref{SARtheorem}.  Any of the infinite values of $\theta \in [0,2\pi)$ will result in a unique $\bA_{h,s}(\theta)$, leading to a different $\bL_*$, and a different $\bB$ matrix in (\ref{eq:SARproof}), but yielding the same positive-definite covariance matrix $\bSigma$. \qedhere
\end{singlespace}
\end{proof}

\begin{wrapfigure}{R}{0.5\textwidth}
  \centering
  \includegraphics[width=0.49\textwidth,keepaspectratio]{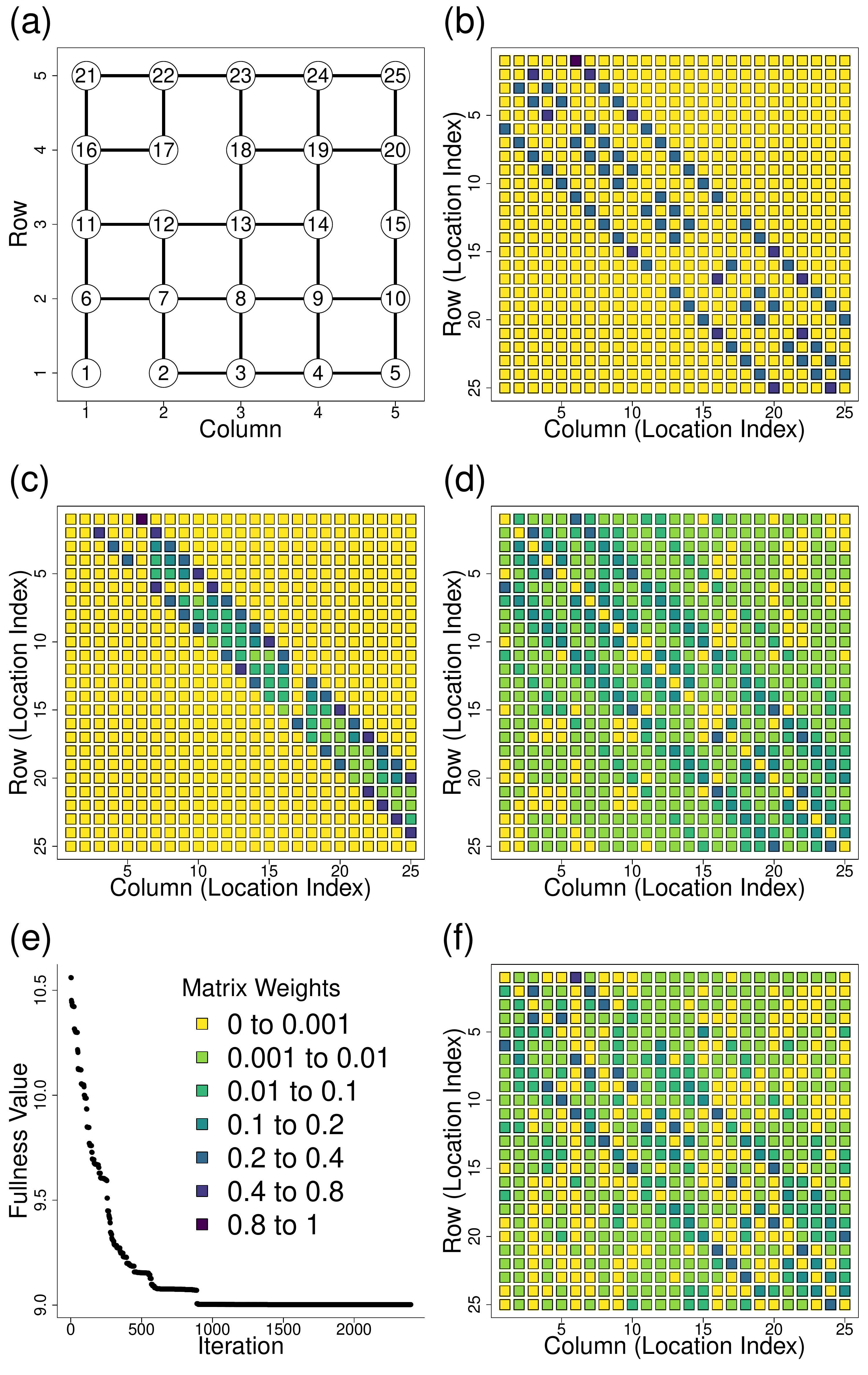}
  \caption{\small Sparseness in CAR and SAR models. (a) 5 $\times$ 5 grid of spatial locations, with lines connecting neighboring sites. The numbers in the circles are indexes of the locations. (b) Graphical representation of weights in the $\rho\bW_+$ matrix in the CAR model. The color legend is given below. (c) Graphical representation of weights in the $\bB$ matrix when using the Cholesky decomposition, and (d) when using spectral decomposition. (e) Fullness function during minimization when searching for sparseness. (f) Graphical representation of weights in the $\bB$ matrix at the termination of an algorithm to search for sparseness using Givens rotations on the spectral decomposition in (d).    \label{Fig-graphModel}}
\end{wrapfigure}


\subsection{Implications of Theorems and Corollaries} \label{sec:implications}

Note that for Corollary~\ref{infSARcorol}, additional $\bB$ matrices that define a fixed positive-definite covariance matrix in Corollary~\ref{infSARcorol} could also be obtained by repeated Givens rotations.  For example, let $\bL_*=\bL\bA_{1,2}\upp(\theta)\bA_{3,4}\upp(\eta)$ for angles $\theta$ and $\eta$.  Then a new $\bB$ can be developed for this $\bL_*$ just as readily as those in the proof to Corollary \ref{infSARcorol}.  We use this idea extensively in the examples.

Theorem~\ref{SARtheorem} helps clarify the use of $\bOmega$. Authors often write the SAR model as $(\bI - \bB)\upi(\bI - \bB\upp)\upi$, assuming that $\bOmega = \bI$ in (\ref{eq:sarcov}). In the proofs to Theorem~\ref{SARtheorem} and Corollary~\ref{infSARcorol}, this requires finding $\bL$ with ones on the diagonal so that $\bG = \bI$.  It is interesting to consider if one can always find such $\bL$, which would justify the practice of using the simpler form, $(\bI - \bB)\upi(\bI - \bB\upp)\upi$, for SAR models.  We leave that as an open question.

In Section~\ref{sec:weightsMat}, we discussed how most CAR and SAR models are constructed by constraining $\rho$ in $\rho\bW$. Consider Theorem \ref{SARtheorem}, where $\bL$ is a lower-triangular Cholesky decomposition. Then $\bP$ has zero diagonals and is strictly lower triangular, and so $\bB^T = \mathbf{PG}^{-1}$ is strictly lower triangular. In this construction, all of the eigenvalues of $\bB$ are zero.   Thus, for SAR models, there are unexplored classes of models that do not depend on the typical construction $\bB = \rho\bW$.  

Most CAR and SAR models are developed such that $\bC$ and $\bB$ are sparse matrices, containing mostly zeros, but containing positive elements whose weights depend locally on neighbors.  Although we demonstrated how to obtain a CAR covariance matrix from a SAR covariance matrix, and vice versa, there is no guarantee that using a sparse $\bC$ in a CAR model will yield a sparse $\bB$ in a SAR model, or vice versa.  We explore this idea further in the following examples.


\section{Examples} \label{sec:ex}

We provide two examples, one where we illustrate Theorem~\ref{SARtheorem} primarily, and a second where we use Theorem~\ref{CARtheorem}.   In the first, we fabricated a simple neighborhood structure and created a positive definite matrix by a CAR construction. Using Givens rotation matrices, we then obtained various non-unique SAR covariance matrices from the CAR covariance matrix. We also explore sparseness in $\bB$ for SAR models when they are obtained from sparse $\bC$ for CAR models.

For a second example, we used real data on neighborhood crimes in Columbus, Ohio. We model the data with the two most common CAR models, using a first-order neighborhood model where $\bC$ is both unstandardized and row-standardized. Then, from a positive-definite covariance matrix obtained from a geostatistical model, we obtain the equivalent and unique CAR covariance matrix.  We use the weights obtained from the geostatistical covariance matrix to allow further CAR modeling, finding a better likelihood optimization than both the unstandardized and row-standardized first-order CAR models.

Consider the graph in Figure~\ref{Fig-graphModel}a, which shows an example of neighbors for a CAR model. Using one to indicate a neighbor, and zeros elsewhere, the $\bW$ matrix was used to create the row-standardized $\bW_+$ matrix in (\ref{eq:bWone}). Values of $\rho_c\bW_+$, where $\rho_c=0.9$, are shown graphically in Figure~\ref{Fig-graphModel}b. For the resulting covariance matrix, $\bSigma_{+}$ in (\ref{eq:bWone}), the Cholesky decomposition was used to create $\bL$ as in Theorem~\ref{SARtheorem}. Using (\ref{eq:SARproof}) in Theorem~\ref{SARtheorem}, the weights matrix $\bB$ created from $\bL$ is shown in Figure~\ref{Fig-graphModel}c.  For the same covariance matrix $\bSigma_{+}$, we also used the spectral decomposition to create $\bL$ as in Theorem~\ref{SARtheorem}. The weights matrix $\bB$ created from this $\bL$, using (\ref{eq:SARproof}) in Theorem~\ref{SARtheorem}, is shown in Figure~\ref{Fig-graphModel}d.  Note that the $\bB$ matrix in Figure~\ref{Fig-graphModel}d is less sparse than $\bB$ in Figure~\ref{Fig-graphModel}c, although they both yield exactly the same covariance matrix by the SAR construction (\ref{eq:sarcov}), which we verified numerically. Figure~\ref{Fig-graphModel}c also verifies our comments in Section~\ref{sec:implications}; that there exists some $\bB$ where all eigenvalues are zero (because all diagonal elements are zero).

We also sought to transform the $\bB$ matrix in Figure~\ref{Fig-graphModel}d to a sparser form using the proof to Corollary~\ref{infSARcorol} and the Given's rotations. For a vector $\bx$ of length $n$, an index of sparseness \citep{Hoye:non:2004} is
\[
  \textrm{sparseness}(\bx) = \frac{\sqrt{n} - \frac{\sum_i|x_i|}{\sqrt{\sum_i x_i^2}}}{\sqrt{n}-1},
\]
which ranges from zero to one.  Ignoring the dimensions of a matrix, we create the matrix function
\[
  f(\bB) = \frac{\sum_{i,j}|b_{i,j}|}{\sqrt{\sum_{i,j} b_{i,j}^2}},
\]
which is a measure of the fullness of a matrix.  We propose an iterative algorithm to minimize $f(\bB)$ for orthonormal Givens rotations as explained in Corollary~\ref{infSARcorol}.  Let $\bL_{h,s}(\theta) = \bL\bA_{h,s}\upp(\theta)$, where $\bL = \bV\bE^{-1/2}\bV\upi$ used the spectral decomposition of $\bSigma_{+}$ as in the proof of Theorem~\ref{SARtheorem}, and $\bA_{h,s}(\theta)$ is a Givens rotation matrix as in the proof of Corollary~\ref{infSARcorol}. Denote $\theta^*_k$ as the value of $\theta$ that minimizes $f(\bB)$ when $\bB$ is created by decomposing $\bL\bA_{h,s}\upp(\theta)$ into $\bP$ and $\bG$ (as in $(ii)$ in Theorem~\ref{SARtheorem}), while constraining $\theta$ to values satisfying $b_{i,j} \ge 0 \ \forall \ i,j$. Then $\bL_{1,2}^{[1]} \equiv \bL\bA_{1,2}\upp(\theta^*_1)$, where $k=1$ is the first iteration.  For the second iteration, let $\theta^*_2$ be the value that minimizes $f(\bB)$ for $\bB$ created from $\bL_{1,2}^{[1]}\bA_{1,3}\upp(\theta)$, and hence for $k=2$, $\bL_{1,3}^{[2]} \equiv \bL_{1,2}^{[1]}\bA_{1,3}\upp(\theta^*_2)$. We cycled through $h = 1,2,\ldots,24$ and $s = (h+1),\ldots,25$ for each iteration $k$ in a coordinate decent minimization of $f(\bB)$. We cycled through all of $h$ and $s$ eight times for a total of $8(25)(25-1)/2 = 2400$ iterations.  The value of $f(\bB)$ for each iteration is plotted in Figure~\ref{Fig-graphModel}e and the final $\bB$ matrix is given in Figure~\ref{Fig-graphModel}f. Although we did not achieve the sparsity of Figure~\ref{Fig-graphModel}c, we were able to increase sparseness from the starting matrix in Figure~\ref{Fig-graphModel}d.  Note that the $\bB$ matrix depicted in Figure~\ref{Fig-graphModel}f yields exactly the same covariance matrix as the $\bB$ matrices shown in Figures~\ref{Fig-graphModel}c,d. There are undoubtedly better ways to minimize $f(\bB)$, such as simulated annealing \citep{Kirk:Gela:Vecc:opti:1983}, and there may be alternative optimization criteria.  We do not pursue these here. Our goal was to show that it is possible to explore many configurations of matrix weights in SAR models, which produce equivalent covariance matrices, by using orthonormal Givens rotations of the $\bL$ matrix. 

\newpage
\subsection{Columbus Crime Data} \label{sec:crime}




\begin{wrapfigure}{L}{0.5\textwidth}
  {\linespread{1.0}
  \centering
  \includegraphics[width=.49\textwidth]{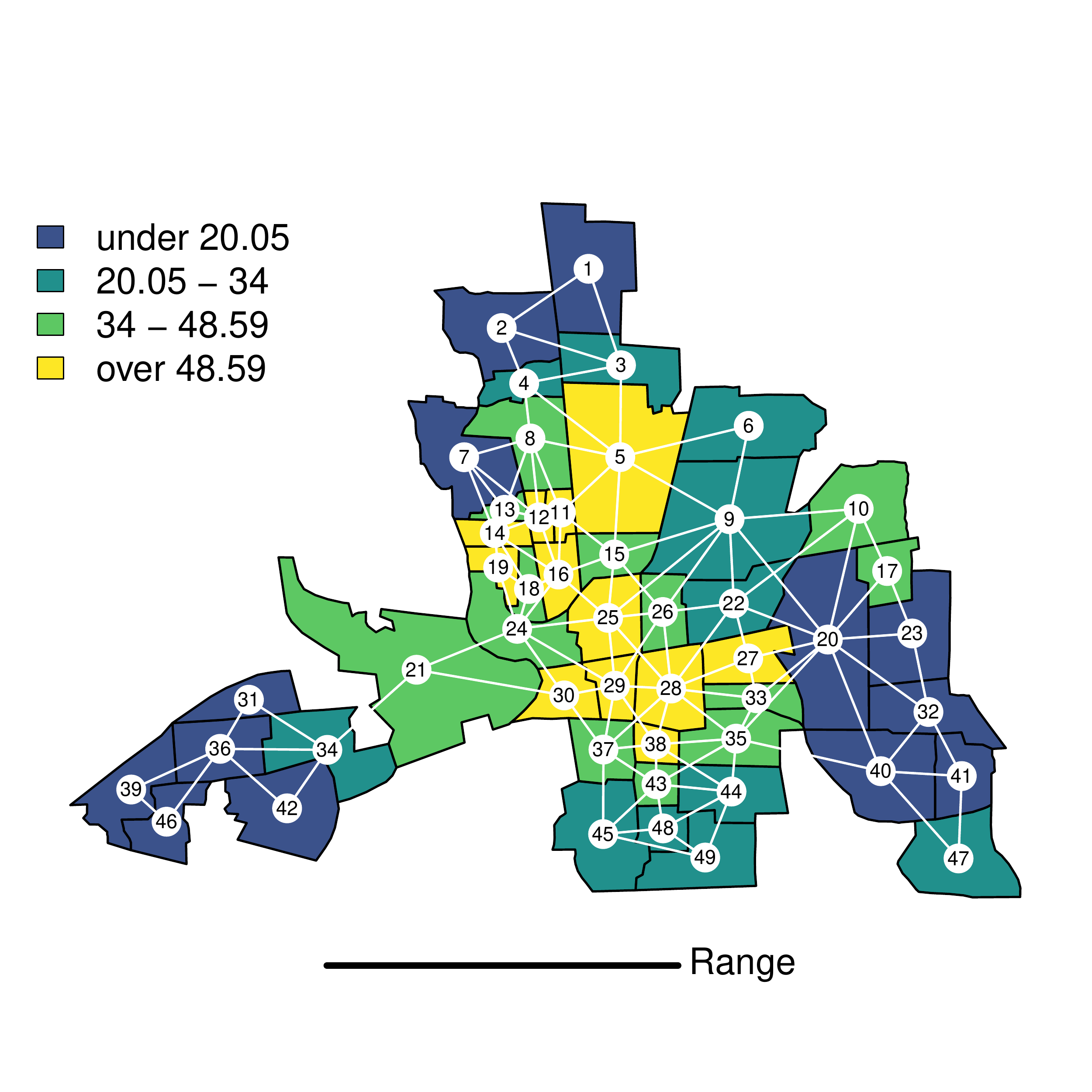}
  \caption{\small Columbus crime map, in rates per 1000 people.  Numbers in each polygon are the indexes for locations, and the white lines show first-order neighbors.  The estimated range parameter from the spherical geostatistical model is shown at the bottom.    \label{Fig-ColumbusCrimeMap}}}         
\end{wrapfigure}

The Columbus data are found in the \texttt{spdep} package \citep{Biva:Hauk:Koss:comp:2013,Biva:Pira:comp:2015} for \texttt{R} \citep{R:Deve:Core:ALan:2016}. Figure~\ref{Fig-ColumbusCrimeMap} shows 49 neighborhoods in Columbus, Ohio.  We used residential burglaries and vehicle thefts per thousand households in the neighborhood \citep[][Table 12.1, p. 189]{Anse:spat:1988} as the response variable. Spatial pattern among neighborhoods appeared autocorrelated (Figure~\ref{Fig-ColumbusCrimeMap}), with higher crime rates in the more central neighborhoods.  When analyzing rate data, it is customary to account for population size \citep[e.g.,][]{Clay:Kald:empi:1987}, which affects the variance of the rates. However, for illustrative purposes, we used raw rates.  A histogram of the data appeared approximately bell-shaped, thus we assumed a Gaussian distribution with a covariance matrix containing autocorrelation among locations.


\begin{wrapfigure}{R}{0.95\textwidth}
  {\linespread{1.0}
  \centering
  \includegraphics[width=.8\textwidth]{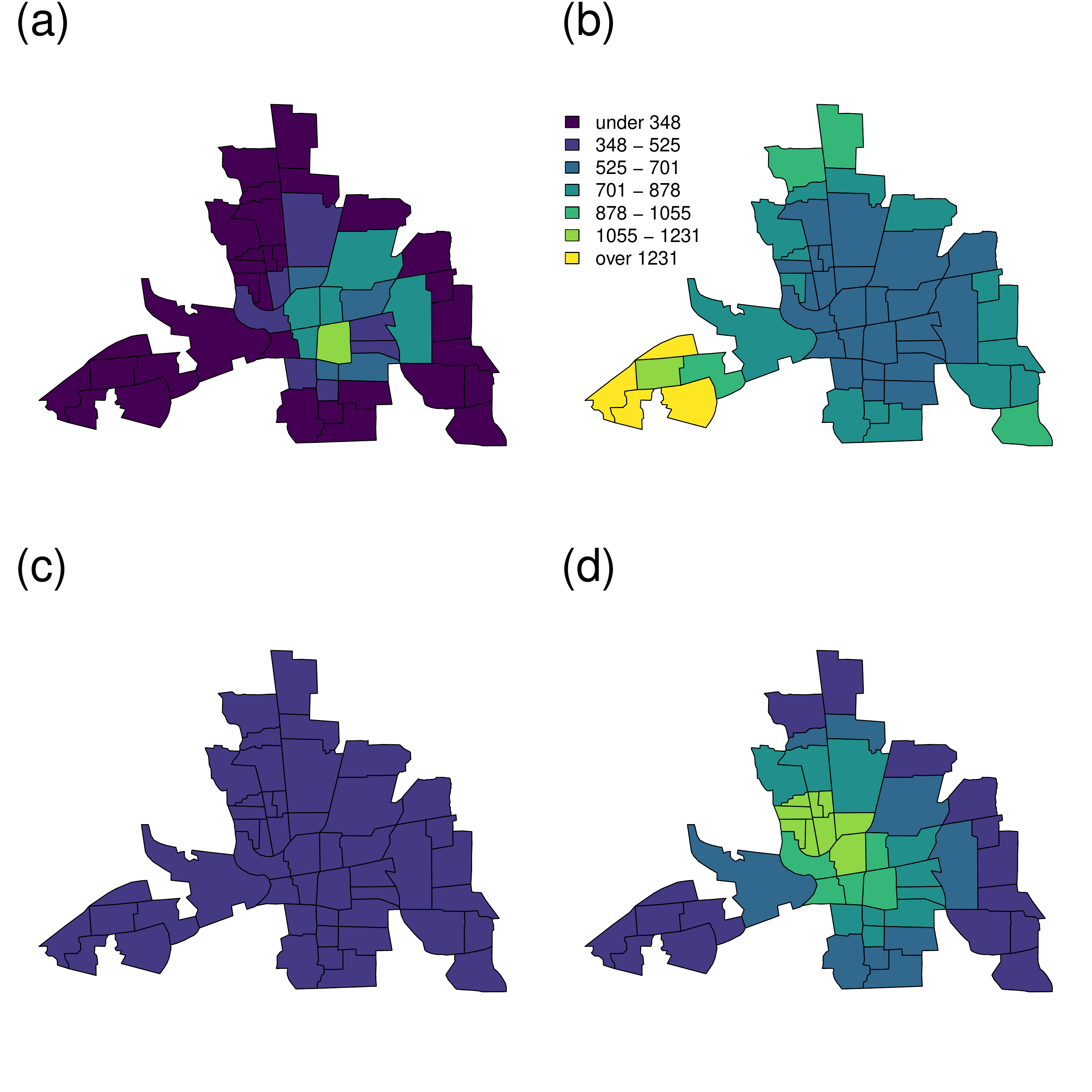}
  \caption{\small Marginal variances by location. (a) Unstandardized first-order CAR model, (b) Row-standardized first-order CAR model, (c) spherical geostatistical model, (d) CAR model using weights obtained from geostatistical model. \label{Fig-MargVar}}         
}         
\end{wrapfigure}

First-order neighbors were also taken from the \texttt{spdep} package for \texttt{R}, and are shown by white lines in Figure~\ref{Fig-ColumbusCrimeMap}.  Using a one to indicate a neighbor, and zero otherwise, we denote the $49 \times 49$ matrix of weights as $\bW_\mun$, and the CAR precision matrix has $\bC = \rho_\mun\bW_\mun$ and $\bM = \sigma_\mun^2\bI$ in (\ref{eq:carcov}). Using the eigenvalues of $\bW_\textrm{un}$, the bounds for $\rho_\mun$ were -0.335 $< \rho_\mun <$ 0.167. We added a constant independent diagonal component, $\delta^2_\mun\bI$ (also called the nugget effect in geostatistics), so the covariance matrix was $\bSigma_\mun = \sigma^2_\mun(\bI - \rho_\mun\bW_\mun)\upi + \delta_\mun^2\bI$.  Denote the crime rates as $\by$. We assumed a constant mean, so $\by \sim \textrm{N}(\bone\mu,\bSigma_\textrm{un})$, where $\bone$ is a vector of all ones. Let $\cL(\btheta_\mun|\by)$ be minus 2 times the restricted maximum likelihood equation \citep[REML,][]{Patt:Thom:reco:1971,Patt:Thom:maxi:1974} for the crime data, where the set of covariance parameters is $\btheta_\mun = (\sigma^2_\mun,\rho_\mun, \delta^2_\mun)\upp$. We optimized the likelihood using REML and obtained $\cL(\hat{\btheta}_\mun|\by)=$ 388.83.   Recall that CAR models have nonstationary variances and covariances \citep[e.g.,][]{Wall:clos:2004}.  The marginal variances of the estimated model are shown in Figure~\ref{Fig-MargVar}a, and the marginal correlations are shown in Figure~\ref{Fig-MargCorr}a.

\begin{wrapfigure}{R}{0.5\textwidth}
  {\linespread{1.0}
  \centering
  \includegraphics[width=.49\textwidth]{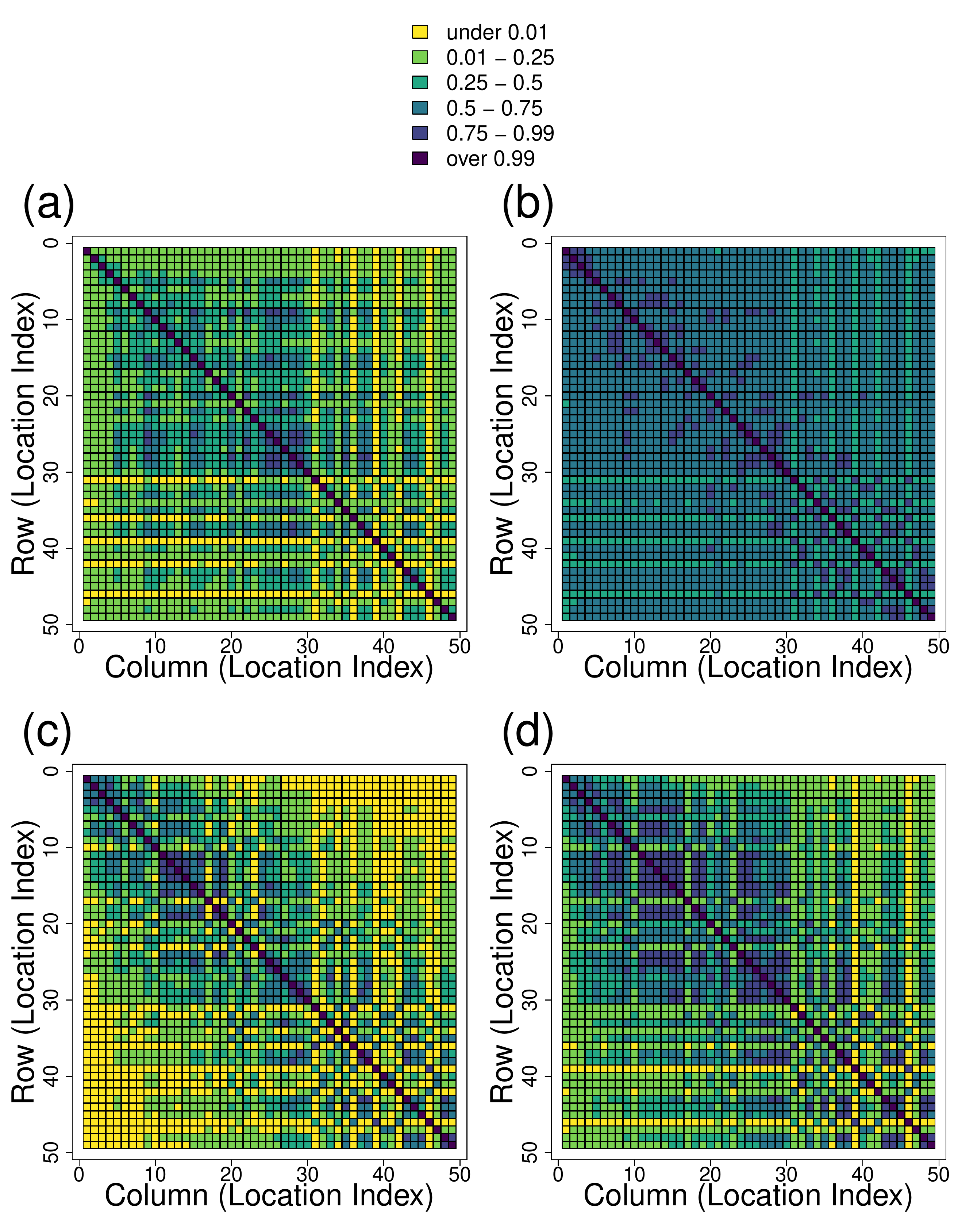}
  \caption{\small Marginal correlations, none of which were below zero. The location indexes are given by the numbers in Figure~\ref{Fig-ColumbusCrimeMap}. (a) Unstandardized first-order CAR model, (b) Row-standardized first-order CAR model, (c) spherical geostatistical model, (d) CAR model using weights obtained from geostatistical model. \label{Fig-MargCorr}}         
}         
\end{wrapfigure}

\vspace{.5cm}
We also optimized the likelihood using the row-standardized weights matrix, $\bW_+$ in (\ref{eq:bWone}), which we denote $\bW_\mrs$.  In this case, the CAR precision matrix has $\bC = \rho_\mrs\bW_+$, $-1 <\rho_\mrs < 1$, and $\bM = \sigma^2_\mrs\bM_+$ in (\ref{eq:carcov}).  Again we added a nugget effect, so $\bSigma_\mrs = \sigma^2_\mrs(\bI - \rho_\mun\bW_+)\upi\bM_+ + \delta_\mrs^2\bI$. For the set of covariance parameters $\btheta_\mrs = (\sigma^2_\mrs,\rho_\mrs, \delta^2_\mrs)\upp$, we obtained $\cL(\hat{\btheta}_\mrs|\by)=$ 397.25.  This shows that the unstandardized weights matrix $\bW_\mun$ provides a substantially better likelihood optimization than $\bW_\mrs$. The marginal variances of the row-standardized model are shown in Figure~\ref{Fig-MargVar}b, and the marginal correlations are shown in Figure~\ref{Fig-MargCorr}b.  The difference between $\cL(\hat{\btheta}_\mun|\by)$ and $\cL(\hat{\btheta}_\mrs|\by)$ indicates that the weights matrix $\bC$ has a substantial effect for these data.  

We optimized the likelihood with a geostatistical model next using a spherical autocorrelation model.  Denote the geostatistical correlation matrix as $\bS$, where
\[
				s_{i,j} = [1 - 1.5(e_{i,j}/\alpha) + 0.5(e_{i,j}/\alpha)^3] \cI(d_{i,j} < \alpha),
\]
and $\cI(\bigcdot)$ is the indicator function, equal to one if its argument is true, otherwise it is zero, and $e_{i,j}$ is Euclidean distance between the centroids of the $i$th and $j$th polygons in Figure~\ref{Fig-ColumbusCrimeMap}.  We included a nugget effect, so $\bSigma_\msp = \sigma^2_\msp\bS + \delta_\msp^2\bI$. For the set of covariance parameters $\btheta_\msp = (\sigma^2_\msp,\alpha, \delta^2_\msp)\upp$, we obtained $\cL(\hat{\btheta}_\msp|\by)=$ 374.61.  The geostatistical model provides a substantially better optimized likelihood than either the unstandardized or row-standardized CAR model. The marginal variances of geostatistical models are stationary (Figure~\ref{Fig-MargVar}c).  The estimated range parameter, $\hat{\alpha}$, is shown by the lower bar in Figure~\ref{Fig-ColumbusCrimeMap}.  Any locations separated by a distance greater than that shown by the bar will have zero correlation (Figure~\ref{Fig-MargCorr}c).

It appears that the geostatistical model provides a much better optimized likelihood than the two most commonly-used CAR models.  Is it possible to find a CAR model to compete with the geostatistical model?  Using Theorem~\ref{CARtheorem}, we created $\bC_\mcg$ and $\bM_\mcg$ as in (\ref{eq:CARproof}) from the positive definite covariance matrix from the geostatistical model, $\bSigma_\msp = (\bI - \rho_\mcg\bC_\mcg)\upi\bM_\mcg$. Here, we have a CAR representation that is equivalent to the spherical geostatistical model.  Letting $\bW_\mcg = \bC_\mcg$, and using $\bSigma_\mcg = \sigma^2_\mcg(\bI - \rho_\mcg\bW_\mcg)\upi\bM_\mcg + \delta^2_\mcg\bI$, we optimized for  $\btheta_\mcg = (\sigma^2_\mcg,\rho_\mcg, \delta^2_\mcg)\upp$.  For $\bSigma_\mcg$ to be positive definite, $\sigma^2_\mcg > 0$,  -1.104 $< \rho_\mcg <$ 1.013, and $\delta^2_\mcg \ge 0$.  Because $\btheta_\mcg = (1,1,0)\upp$ is in the parameter space, we can do no worse than the spherical geostatistical model.  In fact, upon optimizing, we obtained $\cL(\hat{\btheta}_\mcg|\by)=$ 373.95, where $\hat{\sigma}^2_\mcg =$ 0.941, $\hat{\rho}_\mcg =$ 1.01, and $\hat{\delta}^2_\mcg =$ 0, a slightly better optimization than the spherical geostatistical model.  The marginal variances for this geostatistical-assisted CAR model are shown in Figure~\ref{Fig-MargVar}d, and the marginal correlations are shown in Figure~\ref{Fig-MargCorr}d.  Note the rather large changes from Figure~\ref{Fig-MargVar}c to Figure~\ref{Fig-MargVar}d, and from Figure~\ref{Fig-MargCorr}c to Figure~\ref{Fig-MargCorr}d, with seemingly minor changes in $\hat{\sigma}^2_\mcg$, from 1 to 0.941, and in $\rho_\mcg$, from 1 to 1.01. Others have documented rapid changes in CAR model behavior near the parameter boundaries, especially for $\rho_\mcg$ \citep{Besa:Koop:cond:1995, Wall:clos:2004}.


\section{Discussion and Conclusions} \label{sec:discon}

\citet[p. 89]{Hain:spat:1990} provided the most comprehensive comparison of the mathematical relationships between CAR and SAR models.  He provided several results that we restate using notation from Sections \ref{sec:SAR} and \ref{sec:CAR}, and show that some are incorrect or incomplete.

In an attempt to create a CAR covariance matrix from a SAR covariance matrix, assume that $\bB$ is a SAR covariance matrix satisfying conditions \ref{S1}-\ref{S3} and $\bOmega = \bI$ in (\ref{eq:sarcov}). Let $\bM = \bI$ and $\bC$ be symmetric in (\ref{eq:carcov}) [which omits the important case (\ref{eq:bWone})]. Then setting SAR and CAR covariances matrix equal to each other,
\begin{equation} \label{eq:CARequalSAR}
	(\bI-\bC)\upi =[(\bI - \bB)(\bI - \bB\upp)]\upi = (\bI - \bB - \bB\upp + \bB\bB\upp)\upi,
\end{equation}
and \citet{Hain:spat:1990} claims that $\bC$ can be obtained from $\bB$ by setting
\begin{equation} \label{eq:CARfromSAR}
	\bC = \bB + \bB\upp - \bB\bB\upp,
\end{equation}
which is repeated in texts by \citet[][p. 372]{Wall:Gotw:appl:2004} and \citet[][p. 339]{Scha:Gotw:stat:2005}, and in the literature \citep[e.g.,][]{Dorm:etal:meth:2007}.  However, aside from the lack of generality due to assumptions $\bM = \bI$, $\bOmega = \bI$, and symmetric $\bC$, we note that (\ref{eq:CARfromSAR}) is incomplete and too limited to be useful, as given in the following remark.
\begin{remark}
\begin{singlespace}
Condition \ref{C3} in Section~\ref{sec:CAR} is not satisfied for $\bC$ in (\ref{eq:CARfromSAR}) except when $\bB$ contains all zeros. 
\end{singlespace}
\end{remark}
\begin{proof}
\begin{singlespace}
Because $\bB$ has zeros on the diagonal, $\bB + \bB\upp$ will have zeros on the diagonal.  Denote $\bb_i$ as the $i$th row of $\bB$.  Then the $i$th diagonal element of $\bB\bB\upp$ will be the dot product $\bb_i \bigcdot \bb_i$, which will be zero only if all elements of $\bb_i$ are zero. Hence, $\bB + \bB\upp - \bB\bB\upp$ will have zeros on the diagonal only if $\bB$ contains all zeros. \qedhere
\end{singlespace}
\end{proof}

In an attempt to create a SAR covariance matrix from a CAR covariance matrix, assume the same conditions as for (\ref{eq:CARequalSAR}), and that $\bC$ is a CAR covariance matrix satisfying conditions \ref{C1}-\ref{C4}. Let $(\bI - \bC) = \bS\bS\upp$, where $\bS$ is a Cholesky decomposition. \citet{Hain:spat:1990} suggested $\bS = \bI - \bB$ and setting $\bB$ equal to $\bI - \bS$.  However, this is incomplete because condition \ref{S3} in Section~\ref{sec:SAR} will be satisfied only if $\bS$ has all ones on the diagonal, which also has limited use.    

For another approach to relate SAR and CAR covariance matrices, \citet{Hain:spat:1990} described the model $\bF(\bZ-\bmu) = \bH\bvarepsilon$, where $\textrm{var}(\bvarepsilon) = \bV$.  Then $\textrm{E}((\bZ-\bmu)(\bZ-\bmu)\upp) = \bF\upi\bH\bV\bH\upp(\bF\upi)\upp$. Now let $\bF = (\bI -\bC)$, $\bH = \bI$, and $\bV = (\bI - \bC)$ (this appears to originate in \citet{Mart:some:1987}).  The constructed model is really a SAR model except that it violates condition \ref{S2} by allowing $\bV = (\bI - \bC)$.  Alternatively, this can be seen as an attempt to create a SAR model from a CAR model by assuming an inverse CAR covariance matrix for the error structure of the SAR model, which gains nothing.  Because these arguments are unconvincing, and other authors argue that one cannot go uniquely from a CAR to a SAR \citep[e.g.,][]{Mard:maxi:1990}, we can find no further citations for the arguments of \citet{Hain:spat:1990} on obtaining a CAR covariance matrix from a SAR covariance matrix. 

\citet[][p. 409-410]{Cres:stat:1993} provided a demonstration of how a SAR covariance matrix with first-order neighbors in $\bB$ leads to a CAR covariance matrix with third-order neighbors in $\bC$, and claims that, generally, there will be no equivalent SAR covariance matrices for first and second-order CAR covariance matrices.  However, our demonstration in Figure~\ref{Fig-graphModel}c shows that a sparse $\bB$ may be obtained from a sparse CAR model, although it is asymmetric and may not have the usual neighborhood interpretation.  

From Section~\ref{sec:weightsMat}, we showed that pre-specified weights $\bW$ are often scaled by $\rho$, and that $\rho$ is often constrained by the eigenvalues of $\bW$.  However, we have also discussed in Section~\ref{sec:implications} and Figure~\ref{Fig-graphModel}c, that weights can be chosen so that all eigenvalues are zero, for either CAR or SAR models. We have little information or guidance for developing models where all eigenvalues $\bW$ are zero, and this provides an interesting topic for future research. 

\citet{Wall:clos:2004} provided a detailed comparison on properties of marginal correlation for various values of $\rho$ when $\bB$ or $\bC$ are parameterized as $\rho_s\bW$ and $\rho_c\bW$, respectively, but did not develop mathematical relationships between CAR and SAR models. \citet{Lind:Rue:Lind:expl:2011} showed that approximations to point-referenced geostatistical models based on a finite element basis expansion can be expressed as CAR models.  In his discussion of the same, \citet{Kent:disc:2011} noted that, for a given geostatistical model of the Matern class, one could construct either a CAR or SAR model that would approximate the Matern model.  This indicates a correspondence between CAR and SAR models when used as approximations to continuous-space processes, but does not address the relationship between CAR and SAR models on a native areal support.

Our literature review and discussion showed that there have been scattered efforts to establish mathematical relationships between CAR and SAR models, and some of the reported relationships are incomplete on the conditions for those relationships. With Theorems~\ref{SARtheorem} and \ref{CARtheorem} and Corollary~\ref{SAReqCAR}, we demonstrated that any zero-mean Gaussian distribution on a finite set of points, $\bZ \sim \textrm{N}(\bzero,\bSigma)$, with positive-definite covariance matrix $\bSigma$, can be written as either a CAR or a SAR model, with the important difference that a CAR model is uniquely determined from $\bSigma$ but a SAR model is not so uniquely determined. This equivalence between CAR and SAR models can also have practical applications.  In addition to our examples, the full conditional form of the CAR model allows for easy and efficient Gibbs sampling \citep[][p. 163]{Bane:Carl:Gelf:hier:2004} and fully conditional random effects \citep[][p. 86]{Bane:Carl:Gelf:hier:2004}. However, spatial econometric models often employ SAR models \citep{LeSa:Pace:intr:2009}, so easy conversion from SAR to CAR models may offer computational advantages in hierarchical models and provide insight on the role of fully conditional random effects.  We expect future research will extend our findings on relationships between CAR and SAR models and explore novel applications.


\section*{Acknowledgments}

This research began from a working group on network models at the Statistics and Applied Mathematical Sciences (SAMSI) 2014-15 Program on Mathematical and Statistical Ecology. The project received financial support from the National Marine Fisheries Service, NOAA. The findings and conclusions of the NOAA author(s) in the paper are those of the NOAA author(s) and do not necessarily represent the views of the reviewers nor the National Marine Fisheries Service, NOAA. Any use of trade, product, or firm names does not imply an endorsement by the U.S. Government.

\clearpage
\setcounter{equation}{0}
\renewcommand{\theequation}{A.\arabic{equation}}
\setcounter{figure}{0}
\section*{APPENDIX: Propositions on Weights Matrices} \label{app:WMat}

The following proposition is used to show condition~\ref{C1} for CAR models.

\begin{proposition} \label{pro:C1proof}
\begin{singlespace}
  Let $\bSigma = \bA\bM$, where $\bSigma$, $\bA$, and $\bM$ are square matrices, $\bSigma$ is symmetric, $\bA\upi$ exists, and $\bM$ is symmetric and positive definite.  Then $\bSigma$ is positive definite if and only if all of the eigenvalues of $\bA$ are positive real numbers. 
\end{singlespace}
\end{proposition}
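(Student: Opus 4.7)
The plan is to exhibit $\bA$ as similar to a symmetric matrix that is congruent to $\bSigma$, so that the positive-definiteness of $\bSigma$ and the positivity of the eigenvalues of $\bA$ both reduce to the same statement about this auxiliary symmetric matrix.

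The key step is to use the symmetric positive-definite square root $\bM^{1/2}$ of $\bM$, which exists and is itself symmetric positive definite with inverse $\bM^{-1/2}$. I define
\[
   \bT \;=\; \bM^{-1/2}\,\bSigma\,\bM^{-1/2}.
\]
Two observations about $\bT$ drive the proof. First, because $\bSigma$ is symmetric and $\bM^{-1/2}$ is symmetric, $\bT$ is symmetric, so all of its eigenvalues are real. Second, using $\bA = \bSigma\bM^{-1}$ (which holds because $\bM$ is invertible and $\bSigma=\bA\bM$), a short computation gives
\[
   \bT \;=\; \bM^{-1/2}\bA\bM\,\bM^{-1/2} \;=\; \bM^{-1/2}\bA\bM^{1/2} \;=\; \bigl(\bM^{1/2}\bigr)^{-1}\bA\,\bM^{1/2},
\]
so $\bT$ is similar to $\bA$. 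Similar matrices share eigenvalues (with multiplicity), hence the eigenvalues of $\bA$ coincide with those of the symmetric matrix $\bT$; in particular the eigenvalues of $\bA$ are automatically real.

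The proof then finishes by recognizing that $\bT$ is a congruence transform of $\bSigma$ via the invertible symmetric matrix $\bM^{-1/2}$, so $\bT$ is positive definite if and only if $\bSigma$ is positive definite (for any nonzero $\bx$, setting $\by=\bM^{1/2}\bx$ gives $\bx\upp\bT\bx=\by\upp\bSigma\by$, and the map $\bx\mapsto\by$ is a bijection on nonzero vectors because $\bM^{1/2}$ is invertible). Since $\bT$ is symmetric, it is positive definite if and only if all of its eigenvalues are positive. Combining these equivalences with the similarity of $\bA$ and $\bT$ yields: $\bSigma$ is positive definite $\iff$ $\bT$ is positive definite $\iff$ all eigenvalues of $\bT$ are positive $\iff$ all eigenvalues of $\bA$ are positive real numbers.

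There is no real obstacle; the only mildly delicate point is not to forget why the eigenvalues of $\bA$ are necessarily real (they inherit reality from the similar symmetric matrix $\bT$, not from any hypothesis imposed directly on $\bA$). The hypotheses that $\bM$ be symmetric positive definite and that $\bA$ be invertible enter precisely to guarantee that $\bM^{1/2}$ exists and is invertible so that the similarity and congruence steps are both legitimate.
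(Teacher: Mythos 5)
Your proof is correct, and it is cleaner than the paper's. The forward direction coincides with the paper's argument: both exploit the similarity $\bA = \bM^{1/2}\bigl[\bM^{-1/2}\bSigma\bM^{-1/2}\bigr]\bM^{-1/2}$ to transfer positive definiteness of $\bSigma$ into positivity of the eigenvalues of $\bA$. Where you genuinely diverge is the converse. The paper starts from an eigendecomposition $\bA = \bU\bLambda\bU\upi$ with ``orthonormal eigenvectors'' and manipulates the identity $\bA\bM = \bM\bA\upp$ to produce a diagonal $\bGamma$ with $\bM(\bU\upp)\upi = \bU\bGamma$ --- an argument that quietly presumes $\bA$ is diagonalizable and that eigenvectors can be matched up column by column (delicate when eigenvalues repeat). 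You instead run the whole equivalence through the single symmetric matrix $\bT = \bM^{-1/2}\bSigma\bM^{-1/2}$, which is simultaneously similar to $\bA$ and congruent to $\bSigma$; since $\bT$ is symmetric, ``positive definite'' and ``all eigenvalues positive'' coincide for it, and both directions fall out at once. This buys you a uniform two-line argument and sidesteps the diagonalizability issue entirely (the diagonalizability of $\bA$ becomes a \emph{consequence} of its similarity to $\bT$ rather than an assumption). Two trivial slips worth fixing: in the congruence step you should set $\by = \bM^{-1/2}\bx$, not $\by = \bM^{1/2}\bx$, so that $\bx\upp\bT\bx = \by\upp\bSigma\by$ (the bijectivity claim is unaffected); and the invertibility of $\bA$ is not actually used anywhere in your argument --- $\bM^{1/2}$ exists because $\bM$ is symmetric positive definite, independently of $\bA$ --- so your closing remark attributes a role to that hypothesis that it does not play.
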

\begin{proof}
\begin{singlespace}
($\Longleftarrow$): Let $\bM^{-1/2}$ be the matrix such that $\bM^{-1/2}\bM^{-1/2} = \bM^{-1}$, and let $\bM^{1/2}$ be the matrix such that $\bM^{1/2}\bM^{1/2} = \bM$.  Now, $\bA = \bSigma\bM\upi = \bM^{1/2}[\bM^{-1/2}\bSigma\bM^{-1/2}]\bM^{-1/2}$. Then $\bA$ has the same eigenvalues as $[\bM^{-1/2}\bSigma\bM^{-1/2}]$ because they are similar matrices \citep[][p. 525]{Harv:matr:1997}. If $\bSigma$ is positive definite, then $[\bM^{-1/2}\bSigma\bM^{-1/2}]$ is positive definite, so all eigenvalues of $\bA$ are positive real numbers.

($\Longrightarrow$): Let $\bA = \bU\bLambda\bU\upi$, where the columns of $\bU$ contain orthonormal eigenvectors and $\bLambda$ is a diagonal matrix of eigenvalues that are all positive and real. Because of symmetry, $\bA\bM = \bM\bA\upp = \bM(\bU\upp)\upi\bLambda\bU\upp$, so $\bA\bM(\bU\upp)\upi = \bM(\bU\upp)\upi\bLambda$.  This shows that both $\bU$ and $\bM(\bU\upp)\upi$ have columns that contain the eigenvectors for $\bA$, so each column in $\bU$ has a corresponding column in $\bM(\bU\upp)\upi$ that is a scalar multiple.  Let $\bGamma$ be a diagonal matrix of those scalar multiples, so that $\bM(\bU\upp)\upi = \bU\bGamma$.  Hence, $\bU\upi\bM(\bU\upp)\upi = \bGamma$, and notice that all diagonal elements of $\bGamma$ will be positive because $\bM$ is positive definite. Also $\bU\upi\bM = \bGamma\bU\upp$, so $\bSigma = \bA\bM = \bU\bLambda\bU\upi\bM = \bU\bLambda\bGamma\bU\upp$.  Because $\bLambda$ and $\bGamma$ are diagonal, each with all positive real values, $\bSigma$ is positive definite.
 \qedhere
\end{singlespace}
\end{proof}

Condition \ref{C1} is satisfied by letting $\bSigma_\textrm{CAR}$ in (\ref{eq:carcov}) be $\bSigma$ in Proposition~\ref{pro:C1proof}, by letting $(\bI - \bC)\upi$ in (\ref{pro:C1proof}) be $\bA$ in Proposition~\ref{pro:C1proof} (note that if $(\bI - \bC)\upi$ has all positive eigenvalues, so too does $\bI - \bC$), and by letting $\bM$ in (\ref{eq:carcov}) be $\bM$ in Proposition~\ref{pro:C1proof}.

Next, we show the conditions on $\rho$ that ensure that $(\bI - \rho\bW)$ has either nonzero eigenvalues, or positive eigenvalues.

\begin{proposition} \label{pro:PDrhobounds}
\begin{singlespace}
				Consider the square matrix $(\bI - \rho\bW)$, where $w_{i,i} = 0$. Let $\{\lambda_i\}$ be the set of eigenvalues of $\bW$, and suppose all eigenvalues are real. Then
\begin{enumerate}
\item [(i)] if $\rho \notin \{\lambda_i\upi\}$ for all nonzero $\lambda_i$, then $(\bI - \rho\bW)$ is nonsingular, and
\item[(ii)] assume at least two eigenvalues of $\bW$ are not zero, and let $\lambda_{[1]}$ be the smallest eigenvalue of $\bW$, and $\lambda_{[N]}$ be the largest eigenvalue of $\bW$. If $1/\lambda_{[1]} < \rho < 1/\lambda_{[N]}$, then $(\bI - \rho\bW)$ has only positive eigenvalues. 
\end{enumerate}
\end{singlespace}
\end{proposition}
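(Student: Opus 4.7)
The backbone of the proof is the eigenvalue relationship $\omega_i = 1 - \rho\lambda_i$ between $\bW$ and $(\bI - \rho\bW)$. This is immediate from the fact that if $\bW\bv_i = \lambda_i\bv_i$, then $(\bI - \rho\bW)\bv_i = (1-\rho\lambda_i)\bv_i$, so $\bW$ and $(\bI-\rho\bW)$ share eigenvectors and their eigenvalues are related by the affine map $\lambda \mapsto 1-\rho\lambda$. (The text credits this to an earlier proposition, so I would simply invoke it.) With this in hand, both claims reduce to elementary facts about when the quantities $1-\rho\lambda_i$ are nonzero (for part (i)) or strictly positive (for part (ii)).

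For part (i), my plan is to observe that a square matrix is nonsingular if and only if all of its eigenvalues are nonzero. Since the eigenvalues of $(\bI-\rho\bW)$ are $\{1-\rho\lambda_i\}$, singularity occurs precisely when some $1-\rho\lambda_i = 0$. If $\lambda_i = 0$ then $1-\rho\lambda_i = 1 \neq 0$ regardless of $\rho$, so such eigenvalues of $\bW$ are harmless. Otherwise, $1-\rho\lambda_i = 0$ iff $\rho = \lambda_i^{-1}$. Excluding $\rho$ from the set $\{\lambda_i^{-1} : \lambda_i \neq 0\}$ therefore guarantees every eigenvalue of $(\bI-\rho\bW)$ is nonzero, giving nonsingularity.

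For part (ii), I first establish that the hypothesis ``at least two eigenvalues of $\bW$ are nonzero'' together with $\mathrm{tr}(\bW) = \sum_i \lambda_i = 0$ (which follows from $w_{i,i}=0$) forces at least one strictly positive and one strictly negative eigenvalue; hence $\lambda_{[1]} < 0 < \lambda_{[N]}$, making the interval $(1/\lambda_{[1]},\,1/\lambda_{[N]})$ a well-defined nonempty interval containing $0$. Then I need $1 - \rho\lambda_i > 0$, i.e., $\rho\lambda_i < 1$, for every $i$. I partition into three cases: (a) if $\lambda_i = 0$, the inequality holds trivially; (b) if $\lambda_i > 0$, the inequality becomes $\rho < 1/\lambda_i$, and the tightest such constraint is $\rho < 1/\lambda_{[N]}$; (c) if $\lambda_i < 0$, dividing by $\lambda_i$ flips the inequality to $\rho > 1/\lambda_i$, and since $1/\lambda_{[1]}$ is the largest (least negative) of the $1/\lambda_i$ for $\lambda_i < 0$, the binding constraint is $\rho > 1/\lambda_{[1]}$. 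Combining (a)--(c) yields exactly $1/\lambda_{[1]} < \rho < 1/\lambda_{[N]}$.

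The only real obstacle is the sign-flip bookkeeping in case (c): one must be careful that among negative $\lambda_i$, the \emph{most negative} value $\lambda_{[1]}$ gives the \emph{largest} reciprocal $1/\lambda_{[1]}$, so that requiring $\rho > 1/\lambda_{[1]}$ automatically enforces $\rho > 1/\lambda_i$ for every other negative $\lambda_i$. Once this is articulated, the proof is essentially a one-line case analysis.
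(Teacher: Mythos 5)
Your proposal is correct and follows essentially the same route as the paper: both derive the eigenvalue correspondence $\omega_i = 1-\rho\lambda_i$ from the shared eigenvectors and then reduce (i) and (ii) to the sign analysis of $1-\rho\lambda_i$. Your explicit treatment of the $\lambda_i=0$ case in part (ii) and the trace argument for $\lambda_{[1]}<0<\lambda_{[N]}$ are welcome touches (the latter appears in the paper's Section 2.3 rather than in the Appendix proof), but they do not change the approach.
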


\begin{proof}
\begin{singlespace}
			Let $\omega$ be an eigenvalue of $(\bI - \rho\bW)$, so the following holds,
\begin{equation} \label{eq:eigen}
 (\bI - \rho\bW)\bx = \omega\bx.
\end{equation}
Let $\bv_i$ be the eigenvector corresponding to $\lambda_i$.  Solving for $\omega_i$ in (\ref{eq:eigen}), let $\bx = \bv_i$. Then,
\[
	\begin{array}{l}
	\bv_i - \rho\bW\bv_i = \omega_i\bv_i,\\
	 \implies \bv_i - \rho\lambda_i\bv_i = \omega_i\bv_i, \\
	 \implies (1 - \rho\lambda_i)\bv_i = \omega_i\bv_i, \\
	 \implies (1 - \rho\lambda_i) = \omega_i. \\
	\end{array}
\]
 Then,
\begin{enumerate}
\item [(i)] $(\bI - \rho\bW)$ is nonsingular if all $\omega_i \ne 0$, so if $\lambda_i = 0$, then $\omega_i = 1$ for all $\rho$, otherwise $(1 - \rho\lambda_i) \ne 0 \implies \rho \ne 1/\lambda_i$ for all nonzero $\lambda_i$.
\item[(ii)] For all $\omega_i > 0$, $(1 - \rho\lambda_i) > 0 \implies 1 > \rho\lambda_i $ for all $i$. If $\lambda_i < 0$, then $1/\lambda_i < \rho$, and if $\lambda_i > 0$, then $\rho < 1/\lambda_i$. For all negative $\lambda_i$, only $1/\lambda_{[1]} < \rho$ will ensure all $(1 - \rho\lambda_i) > 0$, and for positive $\lambda_i$, only $\rho < 1/\lambda_{[N]}$ will ensure all $(1 - \rho\lambda_i) > 0$.  Hence $1/\lambda_{[1]} < \rho < 1/\lambda_{[N]}$ will guarantee that all eigenvalues of $(\bI - \rho\bW)$ are positive. \qedhere
\end{enumerate}
\end{singlespace}
\end{proof}


\end{document}